\newcommand{\CC}{\mathbb{C}}
\newcommand{\ZZ}{\mathbb{Z}}
\newcommand{\gl}{\mathfrak{gl}}
\newcommand{\fs}{\mathfrak{s}}
\newcommand{\Ymn}{Y_{M|N}}
\newcommand{\glmn}{\mathfrak{gl}_{M|N}}
\newcommand{\mfs}{\check{\mathfrak{s}}}
\newcommand{\sd}{\mathfrak{s}^\dagger}
\newcommand{\sr}{\mathfrak{s}^r}
\DeclareMathOperator{\sgn}{sgn}
\DeclareMathOperator{\Mat}{Mat}
\DeclareMathOperator{\qdet}{qdet}
\DeclareMathOperator{\qsdet}{qsdet}
\numberwithin{equation}{section}
\newtheorem{Theorem}{Theorem}[section]
\newtheorem{Lemma}[Theorem]{Lemma}
\newtheorem{Corollary}[Theorem]{Corollary}
\newtheorem{Proposition}[Theorem]{Proposition}
\theoremstyle{Theorem}
\newtheorem*{thm*}{Theorem}
\newtheorem*{thm**}{Corollary}
\newtheorem*{thm***}{Theorem B}
\theoremstyle{remark}
\newtheorem{Remark}{Remark}
\newtheorem*{Definition}{Definition}
\newtheorem*{Example}{Example}
\numberwithin{equation}{section}
\begin{document}
\title{A note on the center of the super Yangian $\Ymn(\fs)$}
\author[Hao Chang \lowercase{and} Hongmei Hu]{Hao Chang \lowercase{and} Hongmei Hu*}
\address[H. Chang]{School of Mathematics and Statistics, and Key Laboratory of Nonlinear Analysis \& Applications (Ministry of Education), Central China Normal University, Wuhan 430079, People's Republic of China}
\email{chang@ccnu.edu.cn}
\address[H. Hu]{School of Mathematical Sciences, Suzhou University of Science and Technology, 215009 Suzhou, People's Republic of China}
\email{hmhu@usts.edu.cn}
\thanks{* Corresponding author.}

\makeatletter
\makeatletter
\@namedef{subjclassname@2020}{%
  \textup{2020} Mathematics Subject Classification}
\makeatother
\makeatother
\subjclass[2020]{17B37}
\begin{abstract}
Let $\Ymn(\fs)$ be the super Yangian associated with an arbitrary fixed $0^M1^N$-sequence $\fs$.
In the present paper,
we give a new formula for the quantum Berezinian by using the parabolic generators,
which generalizes the usual expression in terms of RTT generators or Drinfeld generators.
\end{abstract}
\maketitle
\setcounter{tocdepth}{2}
\section{Introduction}
The Yangians were introduced  by Drinfeld in his fundamental paper \cite{D85},
they from a remarkable family of quantum groups related to rational solutions of the classical {\it Yang-Baxter equation}.
The Yangian $Y_N$ associated to the Lie algebra $\mathfrak{gl}_N$ was earlier considered in the work of Faddeev and the St. Petersburg school
around 1980.
It is an associative algebra, which can be defined using the {\it RTT formalism} of Faddeev, Reshetikhin and Takhtadzhyan \cite{FRT90}.

In \cite{BK05}, Brundan and Kleshchev found a {\it parabolic presentation} for $Y_N$ accociated to each composition $\mu$ of $N$.
The new presentation corresponds to a block matrix decompositionof $\mathfrak{gl}_N$ of shape $\mu$.
In the special case when $\mu=(N)$,
this presentation is exactly the original {\it RTT presentation}.
On the other extreme case when $\mu=(1,\dots,1)=(1^N)$,
the corresponding parabolic presentation is just a variation of Drinfeld's from \cite{D88};
see \cite[Remark 5.12]{BK05}.

The super Yangian associated with the Lie superalgebra $\mathfrak{gl}_{M|N}$ was defined by Nazarov \cite{Na91} in terms of the RTT presentation as a super analogue of $Y_N$.
A Drinfeld-type presentation corresponding to a standard {\it $01$-sequence} was obtained by Gow \cite{Gow07}.
The parabolic presentations of the super Yangian $Y_{M|N}(\fs)$ associated with arbitrary $01$-sequence $\fs$ were given by Peng \cite{Peng16}.
Later, Tsymbaliuk \cite{Tsy20} introduced the Drinfeld super Yangian, which recovered the construction of \cite{Peng16} in a particular case.
Recently, the finite-dimensional irreducible representations of the super Yangian $Y_{M|N}(\fs)$ are described with the use of the {\it odd reflections} (\cite{Mol22,Lu22}).

It is well-known that the  the centre $Z(Y_N)$ of $Y_N$ is generated by the coefficients of the {\it quantum
determinant} $\qdet T(u)$, see \cite[Theorem 2.13]{MNO96}.
Moreover, the quantum determinant $\qdet T(u)$ admits the factorization in terms of the diagonal Drinfeld-type generators (see \cite[Theorem 1.10.5]{Mol07}, \cite[Theorem 8.6]{BK05}).
For the super Yangian $\Ymn$, Nazarov defined the {\it quantum Berezinian} $\qsdet T(u)$ which plays a similar role in the study of the super Yangian $Y_{M|N}$ as the quantum determinant does in the case of $Y_N$.
The quantum Berezinian $\qsdet T(u)$ can also be written as a formula in terms of the diagonal Drinfeld generators (\cite[Theorem 1]{Gow05}).
In his article \cite{Tsy20}, Tsymbaliuk also gave a description of the center of the super Yangian  associated with arbitrary $01$-sequence by using the quantum Berezinian which is defined via the diagonal Drinfeld generators (\cite[(2.41)]{Tsy20}, see also \cite[Remark 2.5]{BG19}), thus this generalizes the results of \cite{Na91},\cite{Gow05}.

Parabolic presentations play a central role in the study of the finite $W$-algebras and finite $W$-superalgebras (see \cite{BK06, Peng21}).
Our goal in this paper is to give a new formula for the quantum Berezinian in terms of the diagonal parabolic generators.
We organize this article in the following manner.
In Section \ref{section:basic},
we recall some basic properties of the (super) Yangian $Y_{M|N}$ and the parabolic generators.
For the Yangian $Y_N$,
it is shown in Section \ref{section:yangian} that the quantum determinant $\qdet T(u)$ admits a factorization in terms of the diagonal parabolic generators.
In Section \ref{section:superyangian}, we define the quantum Berezinian $\qsdet T(u)$ in terms of the RTT generators for arbitrary $01$-sequence.
This definition in the case of standard $01$-sequence goes back to \cite{Na91}.
We will show that the quantum Berezinian can be decomposed into the product in terms of the diagonal parabolic generators.
In particular, for the tuple $(1,\dots,1)=(1^{M+N})$,
this decomposition coincides with Tsymbaliuk's formula from \cite{Tsy20}.

\section{The super Yangian $\Ymn$}\label{section:basic}
\subsection{RTT Presentation}\label{subsection RTT presentation}
Let $\fs$ be a $0^M1^N$-sequence (or $01$-sequence for short) of $\glmn$,
which is a sequence consisting of $M$ $0'$s and $N$ $1'$s,
arranged in a row with respect to a certain order.
It is well-known \cite[Section 1.3]{CW13} that there is a bijection between the set of $0^M1^N$-sequence and the Weyl group orbits of simple systems of $\glmn$.

For a given $\fs$,
the {\it super Yangian} associated to the general linear Lie superalgebra $\glmn$ is a unital associative superalgebra over $\CC$ generated by the {\it RTT generators} \cite{Na91}
$\{t_{i,j}^{(r)};~i,j=1,\dots,M+N,~r\geq 1\}$ subject to the following relations:
\begin{align}\label{RTT relations}
\left[t_{i,j}^{(r)}, t_{k,l}^{(s)}\right] =(-1)^{|i||j|+|i||k|+|j||k|}\sum_{t=0}^{\min(r,s)-1}
\left(t_{k, j}^{(t)} t_{i,l}^{(r+s-1-t)}-
t_{k,j}^{(r+s-1-t)}t_{i,l}^{(t)}\right),
\end{align}
where $|i|$ denotes the $i$-th digit of $\fs$ and the parity of $t_{i,j}^{(r)}$ for $r>0$ is defined by $|i|+|j|~(\text{mod}~2)$,
and the bracket in \eqref{RTT relations} is understood as the supercommutator.
The algebra is denoted by $\Ymn(\fs)$ (or $\Ymn$ when no confusion may occur).
The element $t_{i,j}^{(r)}$ is called an {\it even} ({\it odd}, respectively) element if its parity is $0$ ($1$, respectively).

The original definition in \cite{Na91} corresponds to the case when $\fs$ is the standard one, that is
\begin{equation}\label{standard sequence}
\fs^{st}=\stackrel{M}{\overbrace{0\ldots0}}\,\stackrel{N}{\overbrace{1\ldots1}}.
\end{equation}

\begin{Remark}\label{rk1}
When $N=0$ and $\fs=\fs^{st}$, the super Yangian $\Ymn$ is natural isomorphic to the usual Yangian $Y_M$.
\end{Remark}

By convention, we set $t_{i,j}^{(0)}:=\delta_{i,j}$.
We often put the generators $t_{i,j}^{(r)}$ for all $r\geq 0$ to form the power series
\begin{align*}
t_{i,j}(u):= \sum_{r \geq 0}t_{i,j}^{(r)}u^{-r} \in\Ymn[[u^{-1}]].
\end{align*}
These power series for all $1\leq i,j\leq M+N$ can be collected together into a single matrix
\[T(u):=\big(t_{i,j}(u)\big)_{1\leq i,j\leq M+N}\in\Mat_{M+N}(\Ymn[[u^{-1}]]).\]
Note that the matrix $T(u)$ is invertible,
we observe the following notation for the entries of the inverse of the matrix $T(u)$:
$$T(u)^{-1}=:\left(t_{i,j}'(u)\right)_{i,j=1}^{M+N}.$$
In terms of generating series, defining relation \eqref{RTT relations} are equivalent to
\begin{align}\label{tiju tklu relation}
[t_{i,j}(u),t_{k,l}(v)]=\frac{(-1)^{|i||j|+|i||k|+|j||k|}}{(u-v)}(t_{k,j}(u)t_{i,l}(v)-t_{k,j}(v)t_{i,l}(v)).
\end{align}

\subsection{Maps between super Yangians}
There are some notations related to any fixed $0^M1^N$-sequence $\fs$ in \cite[Section 4]{Peng16}:
\begin{enumerate}
\item $\mfs$:= the $0^N1^M$-sequence obtained by interchanging the 0's and 1's of $\fs$.
\item $\sr$:= the reverse of $\fs$.
\item $\sd$:= $(\mfs)^r$, the reverse of $\mfs$.
\end{enumerate} If $\fs_1$ and $\fs_2$ are two $01$-sequences, then $\fs_1\fs_2$ simply means the concatenation of $\fs_1$ and $\fs_2$.

For future reference, we recall
the following result, cf. \cite[Proposition 4.1]{Peng16}; see also \cite[Section 4]{Gow07}.

\begin{Proposition}\label{morphisms}
	\begin{enumerate}
		\item The map $\rho_{M|N}:\Ymn(\fs)\rightarrow Y_{N|M}(\sd)$ defined by
		\[
		\rho_{M|N}\big(t_{ij}(u)\big)=t_{M+N+1-i,M+N+1-j}(-u)
		\]
		is an isomorphism.
		\item The map $\omega_{M|N}:\Ymn(\fs)\rightarrow \Ymn(\fs)$ defined by
		\[\omega_{M|N}\big(T(u)\big)=\big(T(-u)\big)^{-1}  \]
		is an automorphism.
		\item The map $\zeta_{M|N}:\Ymn(\fs)\rightarrow Y_{N|M}(\sd)$ defined by
		\[\zeta_{M|N}=\rho_{M|N}\circ\omega_{M|N}\]
		is an isomorphism.
		\item Let $p,q\in\ZZ_{\geq 0}$ and let $\fs_1$ be an arbitrary $0^p1^q$-sequence. Let
		\[\varphi_{p|q}:\Ymn(\fs)\rightarrow Y_{p+M|q+N}(\fs_1\fs)\]
		be the injective algebra homomorphism sending each $t_{i,j}^{(r)}$ in $\Ymn(\fs)$ to $t_{p+q+i,p+q+j}^{(r)}$ in $Y_{p+M|q+N}(\fs_1\fs)$.
		Then the map $\psi_{p|q}:\Ymn(\fs)\rightarrow Y_{p+M|q+N}(\fs_1\fs)$ defined by
		\[\psi_{p|q}=\omega_{p+M|q+N}\circ\varphi_{p|q}\circ\omega_{M|N},\]
		is an injective homorphism.
\end{enumerate}
\end{Proposition}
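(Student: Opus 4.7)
The plan is to verify each of the four assertions directly from the defining relations \eqref{RTT relations}, noting that (3) and (4) will follow formally from (1) and (2).

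For part (1), I first observe that $\rho_{M|N}$ is parity-preserving: writing $|i|_\fs$ for the $i$-th digit of $\fs$, one has $|M+N+1-i|_{\sd} = 1-|i|_\fs$, so $|i|_\fs+|j|_\fs \equiv (1-|i|_\fs)+(1-|j|_\fs) \pmod 2$. The defining relations are then preserved by the combined effect of (i) the substitution $u\mapsto -u$, which introduces $(-1)^r$ on each coefficient $t_{ij}^{(r)}$ and produces a net factor $-1$ upon comparing the commutator side (total sign $(-1)^{r+s}$) with the right-hand sum (total sign $(-1)^{r+s-1}$), and (ii) the parity identity $(1-a)(1-b)+(1-a)(1-c)+(1-b)(1-c) \equiv 1+ab+ac+bc \pmod 2$, which shows that the Koszul prefactor in $Y_{N|M}(\sd)$ differs from that in $\Ymn(\fs)$ by precisely $-1$. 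These two signs cancel. Bijectivity is immediate from the relation $\rho_{N|M}\circ\rho_{M|N}=\id$.

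For part (2), the cleanest argument uses the matrix form of \eqref{RTT relations},
\[ R(u-v)\,T_1(u)\,T_2(v) \;=\; T_2(v)\,T_1(u)\,R(u-v), \]
where $R(u)$ is the super R-matrix attached to $\fs$. Taking the inverse of both sides and substituting $u\mapsto -u$, $v\mapsto -v$ produces the same relation with $T(u)$ replaced by $T(-u)^{-1}$. Hence $T(u)\mapsto T(-u)^{-1}$ extends to an algebra homomorphism $\omega_{M|N}$. The involutive property is direct:
\[ \omega_{M|N}^2(T(u)) = \omega_{M|N}(T(-u)^{-1}) = \omega_{M|N}(T(-u))^{-1} = (T(u)^{-1})^{-1} = T(u), \]
so $\omega_{M|N}$ is an automorphism.

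Part (3) is immediate as a composition of an isomorphism with an automorphism. For part (4), $\varphi_{p|q}$ realizes $\Ymn(\fs)$ as the subalgebra generated by the matrix entries $t_{p+q+i,p+q+j}(u)$ in $Y_{p+M|q+N}(\fs_1\fs)$, and its injectivity follows from the PBW theorem for super Yangians; composing with the automorphisms $\omega_{p+M|q+N}$ and $\omega_{M|N}$ preserves injectivity, yielding the assertion for $\psi_{p|q}$. The main obstacle is in part (2): for general $\fs$, the super R-matrix $R(u)$ carries nontrivial $\fs$-dependent signs, so the inversion-and-substitution argument must be carried out with careful Koszul bookkeeping. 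An equivalent route would be to verify the RTT relations for the entries of $T(-u)^{-1}$ directly using classical identities (after Molev and others) relating $t_{ij}'(u)$ to quantum minors of $T(u)$.
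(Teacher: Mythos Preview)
The paper does not prove this proposition at all; it is simply recalled from \cite[Proposition~4.1]{Peng16} and \cite[Section~4]{Gow07}, so there is no in-paper argument to compare with. Your direct verification from the defining relations is exactly the route taken in those references, and your sign bookkeeping in part~(1) is correct: the parity identity you record precisely accounts for the extra $-1$ coming from the $u\mapsto -u$ substitution.

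One small caution on part~(2): the sentence ``taking the inverse of both sides and substituting $u\mapsto -u$, $v\mapsto -v$ produces the same relation'' hides a step. Inverting $R(u-v)T_1(u)T_2(v)=T_2(v)T_1(u)R(u-v)$ reverses the order of the $T$-factors, so to recover the RTT relation in the correct order for $T(-u)^{-1}$ one must also use the unitarity-type property $R_{12}(u)R_{21}(-u)=f(u)\cdot 1$ (equivalently, conjugate by the super-permutation $P$ and use $PR_{12}(u)P=R_{21}(u)$). You flag exactly this kind of issue in your closing remarks, so you are aware of it; just make the use of unitarity explicit rather than leaving it implicit in ``produces the same relation.'' Parts~(3) and~(4) are formal consequences as you say, and the injectivity of $\varphi_{p|q}$ is already part of the hypothesis (it is asserted in the proposition, justified by PBW in \cite{Peng16}), so nothing further is required there.
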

We call $\psi_{p|q}$ the {\it shift map} and $\zeta_{M|N}$ the {\it swap map}.

\subsection{Parabolic generators}\label{subsection:parabolic generators}
Let $\mu=(\mu_1,\ldots,\mu_n)$ be a given composition of $M+N$ with length $n$ and
fix a $0^M1^N$-sequence $\fs$.

By definition, the leading minors of the matrix $T(u)$ are invertible.
Then it possesses a {\it Gauss decomposition} (cf. \cite{GR97}) with respect to $\mu$
\[
T(u)=F(u)D(u)E(u)
\]
for unique {\it block matrices}
\[
D(u) = \left(
\begin{array}{cccc}
D_{1}(u) & 0&\cdots&0\\
0 & D_{2}(u) &\cdots&0\\
\vdots&\vdots&\ddots&\vdots\\
0&0 &\cdots&D_n(u)
\end{array}
\right),
$$$$
E(u) =
\left(
\begin{array}{cccc}
I_{\mu_1}& E_{1,2}(u) &\cdots&E_{1,n}(u)\\
0 & I_{\mu_2}&\cdots&E_{2,n}(u)\\
\vdots&\vdots&\ddots&\vdots\\
0&0 &\cdots&I_{\mu_n}
\end{array}
\right),\:
F(u) = \left(
\begin{array}{cccc}
I_{\mu_1}& 0 &\cdots&0\\
F_{2,1}(u) & I_{\mu_2} &\cdots&0\\
\vdots&\vdots&\ddots&\vdots\\
F_{n,1}(u)&F_{n,2}(u) &\cdots&I_{\mu_n}
\end{array}
\right),
\]
where
\begin{align}\label{definition of Da}
D_a(u)=(D_{a;i,j}(u))_{1\leq i,j\leq\mu_a}
\end{align}
\begin{align}\label{definition of Eab}
E_{a,b}(u)=(E_{a,b;i,j}(u))_{1\leq i\leq\mu_a,1\leq j\leq\mu_b},\quad
F_{b,a}(u)=(F_{b,a;i,j}(u))_{1\leq i\leq\mu_b,1\leq j\leq\mu_a}
\end{align}
are $\mu_a\times\mu_a,\mu_a\times\mu_b$ and $\mu_b\times\mu_a$ matrices, respectively,
for all $1\leq a\leq n$ in (\ref{definition of Da}) and all $1\leq a<b\leq n$ in (\ref{definition of Eab}).
The entries of these matrices define power series
\[D_{a;i,j}(u):=\sum_{r\geq 0}D_{a;i,j}^{(r)}u^{-r},\quad E_{a,b;i,j}(u):=\sum_{r\geq 1}E_{a,b;i,j}^{(r)}u^{-r},\quad
F_{b,a;i,j}(u):=\sum_{r\geq 1}F_{b,a;i,j}^{(r)}u^{-r}.\]
Also note that all the submatrices $D_{a}(u)$'s are invertible, and we define the $\mu_a\times\mu_a$ matrix
$D_a^{\prime}(u)=\big(D_{a;i,j}^{\prime}(u)\big)_{1\leq i,j\leq \mu_a}$ by
$D_a^{\prime}(u):=\big(D_a(u)\big)^{-1}$.

In fact, the algebra $\Ymn$ is generated by the elements $\{D_{a;i,j}^{(r)};~1\leq a\leq n, 1\leq i,j\leq \mu_a, r\geq 0\}$,
$\{E_{a;i,j}^{(r)};~1\leq a<n, 1\leq i\leq \mu_a, 1\leq j\leq \mu_{a+1},r\geq 1\}$ and $\{F_{a;i,j}^{(r)};~1\leq a<n, 1\leq i\leq \mu_{a+1}, 1\leq j\leq \mu_{a},r\geq 1\}$ (\cite[Theorem 3.4 and Remark 3.5]{Peng16}).
These generators are called {\it parabolic generators}.
In the following,
we will use the notation $\Ymn(\fs)$ or $\Ymn(\mu)$ or $Y_{\mu}(\fs)$ to emphasize the choice of $\fs$ or $\mu$ or both when necessary.

When necessary,
we will add an additional superscript $\mu$ to our notation to avoid any ambiguity as $\mu$ varies.
We write the matrix $T(u)$ in block form as
\[
T(u) =
\left(
\begin{array}{ccc}
^{\mu}T_{1,1}(u) &\cdots&^{\mu}T_{1,n}(u)\\
\vdots&\ddots&\vdots\\
^{\mu}T_{n,1}(u) &\cdots&^{\mu}T_{n,n}(u)
\end{array}
\right),
\]
where $^{\mu}T_{a,b}(u)$ is a $\mu_a\times\mu_b$ matrix.
In terms of quasideterminants of \cite{GR97},
we have the following description (cf. \cite[(3.6)]{Peng16}):
\begin{align}\label{quasideterminant D}
D_a(u) =
\left|
\begin{array}{cccc}
^{\mu}T_{1,1}(u) & \cdots &^{\mu}T_{1,a-1}(u)&^{\mu}T_{1,a}(u)\\
\vdots & \ddots &\vdots&\vdots\\
^{\mu}T_{a-1,1}(u)&\cdots&^{\mu}T_{a-1,a-1}(u)&^{\mu}T_{a-1,a}(u)\\
^{\mu}T_{a,1}(u) & \cdots & ^{\mu}T_{a,a-1}(u)&
\hbox{\begin{tabular}{|c|}\hline$^{\mu}T_{a,a}(u)$\\\hline\end{tabular}}
\end{array}
\right|.
\end{align}

In view of \cite[(4.1)]{Peng16},
we have
\begin{equation}\label{psit}
  \psi_{p|q}\big(t_{ij}(u)\big)=
  \left| \begin{array}{cccc} t_{1,1}(u) &\cdots &t_{1,p+q}(u) &t_{1, p+q+j}(u)\\
         \vdots &\ddots &\vdots &\vdots \\
         t_{p+q,1}(u) &\cdots &t_{p+q,p+q}(u) &t_{p+q, p+q+j}(u)\\
         t_{p+q+i, 1}(u) &\cdots &t_{p+q+i,p+q}(u) &\boxed{t_{p+q+i, p+q+j}(u)}
         \end{array} \right|.
         \end{equation}

Note that \eqref{psit} implies that $\psi_{p|q}$ depends only on $p+q$ so we may simply write $\psi_{p|q}=\psi_{p+q}$ when appropriate.
Then (\ref{quasideterminant D}) and \eqref{psit} would imply that
\begin{align}\label{psi D1=Da}
D_{a;i,j}(u)=\psi_{\mu_1+\cdots+\mu_{a-1}}(D_{1;i,j}(u))
\end{align}

In the special case when $\mu=(M+N)$,
we have $T(u)= \!^{\mu}T_{1,1}(u)$.
The corresponding parabolic generators is exactly the original RTT generators $\{t_{i,j}^{(r)};~1\leq i,j\leq M+N; r\geq 1\}$ in subsection \ref{subsection RTT presentation}.
At another extreme, for $\mu=(1,\cdots, 1)$,
the parabolic generators is called the {\it Drinfeld generators}.
In this case, all $D_a(u)$'s are $1\times 1$ matrices.
We will use $d_a(u)$ instead of $D_a(u)$ for conciseness.

\section{The center of $Y_N$}\label{section:yangian}
When $M=0$ and $\fs=\fs^{st}=0^M1^N$,
the composition $Y_{0|N}(\fs)\stackrel{\zeta_{0|N}}{\rightarrow}Y_{N|0}(\sd) \cong Y_N$ is an algebra isomorphism.
In this section, we will consider the usual Yangian $Y_N$ and describe the center of $Y_N$ in terms of the parabolic generators.

\subsection{Quantum determinants}
Let $X(u)=(x_{i,j}(u))_{i,j=1}^{\ell}$ be an arbitrary matrix whose entries are formal power series in $u^{-1}$ with coefficients from $Y_N$.
Following \cite{MNO96},
one can define the {\it quantum determinant} of the matrix $X(u)$ as follows:
\begin{align}\label{definition of qdet}
\qdet X(u)=\sum\limits_{\sigma\in S_{\ell}}\sgn(\sigma)x_{\sigma(1),1}(u)x_{\sigma(2),2}(u-1)\cdots x_{\sigma(\ell),\ell}(u-\ell+1),
\end{align}
where $S_\ell$ is the symmetric group on $\ell$ elements.

For tuples $\bm{i}=(i_1,\dots,i_d)$ and $\bm{j}=(j_1,\dots,j_d)$ of integers from $\{1,\dots,N\}$,
we let
\begin{align}\label{submatrix yangian}
T_{\bm{i},\bm{j}}(u):=(t_{i_k,j_l}(u))_{1\leq k,l\leq d}
\end{align}
be the corresponding $d\times d$ submatrix of $T(u)$.
By definition, we have
\begin{align}\label{definition of qdet minor}
\qdet T_{\bm{i},\bm{j}}(u)=\sum\limits_{\pi\in S_d}\sgn(\pi)t_{i_{\pi(1),j_1}}(u)t_{i_{\pi(2),j_2}}(u-1)\cdots t_{i_{\pi(d),j_d}}(u-d+1).
\end{align}
We warn the reader that our $T_{\bm{i},\bm{j}}(u)$ is different from the one used in \cite[(8.2)]{BK05}.
The $T_{\bm{i},\bm{j}}(u)$ in {\it loc. cit.} is just our $\qdet T_{\bm{i},\bm{j}}(u)$.

In the special case $\bm{i}=\bm{j}=(1,\dots,N)$,
we denote the quantum determinant $\qdet T_{\bm{i},\bm{j}}(u)$ instead by $c_N(u)$, i.e.
\begin{align}\label{cnu=quantum det}
c_N(u)=\sum\limits_{r\geq 0}c_N^{(r)}u^{-r}:=\qdet T_{(1,\dots,N),(1,\dots,N)}(u)=\qdet T(u).
\end{align}

\subsection{Center of $Y_N$}
According to \cite[Theorem 2.13]{MNO96},
the coefficients $c_N^{(1)},c_N^{(2)},\dots$ of $\qdet T(u)$ are algebraically independent and generate the center $Z(Y_N)$ of $Y_N$.
Moreover, in terms of the Drinfeld generators,
it is well-known (see \cite[Theorem 8.6]{BK05}) that:
\begin{align}\label{cnu=product of di}
c_N(u)=d_1(u)d_2(u-1)\cdots d_N(u-N+1).
\end{align}

For $m\in\ZZ_{\geq 0}$, we write $\psi_m:Y_N\hookrightarrow Y_{m+N}$ for the shift map (see Proposition \ref{morphisms}(4)).
\begin{Lemma}\cite[Lemma 8.5]{BK05}\label{Lemma psi qdet=}
Let $\bm{i},\bm{j}$ be $d$-tuples of distinct integers from $\{1,\dots,N\}$. Then
\[
\psi_m(\qdet T_{\bm{i},\bm{j}}(u))=c_m(u+m)^{-1}\qdet T_{m\#\bm{i},m\#\bm{j}}(u+m),
\]
where $m\#\bm{i}$ denotes the $(m+d)$-tuple $(1,\dots,m,m+i_1,\dots,m+i_d)$ and $m\#\bm{j}$ defined similarly.
\end{Lemma}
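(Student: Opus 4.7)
The plan is to reduce the identity to a quantum Sylvester-type factorization inside the ambient Yangian $Y_{m+N}$, working through the Gauss decomposition with respect to the composition $\mu=(m,N)$.

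First, I translate the left-hand side via the Gauss decomposition. By \eqref{psit}, the shift map $\psi_m:Y_N\hookrightarrow Y_{m+N}$ sends $t_{ij}(u)$ to the $(i,j)$-entry $D_{2;i,j}(u)$ of the Schur-complement block $D_2(u)$ in the Gauss decomposition $T(u)=F(u)D(u)E(u)$ of $Y_{m+N}$ with block sizes $(m,N)$. Since $\psi_m$ is an algebra homomorphism and commutes with the numerical shifts $u\mapsto u-k$, applying it term-by-term in \eqref{definition of qdet minor} yields
\[
\psi_m\bigl(\qdet T_{\bm{i},\bm{j}}(u)\bigr)=\qdet(D_2)_{\bm{i},\bm{j}}(u),
\]
where the right-hand side denotes the quantum determinant of the $d\times d$ matrix with entries $D_{2;i_k,j_l}(u)$.

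Second, I apply a quantum Sylvester factorization to the big quantum minor on the right-hand side. Consider the $(m+d)\times(m+d)$ submatrix $T_{m\#\bm{i},m\#\bm{j}}(v)$ of $T(v)$, where $v:=u+m$. Its upper-left $m\times m$ block coincides with the block $D_1(v)$ of the full $T(v)$, and a direct entrywise computation — using that the classical Schur-complement formula $D-CA^{-1}B$ restricted to rows $\bm{i}$ and columns $\bm{j}$ equals $(D-CA^{-1}B)_{\bm{i},\bm{j}}$ — identifies its Schur complement with the submatrix $(D_2)_{\bm{i},\bm{j}}(v)$. Applying the Yangian factorization of the quantum determinant on the associated block-diagonal Gauss form (the same argument underlying \cite[Theorem 8.6]{BK05}), we obtain
\[
\qdet T_{m\#\bm{i},m\#\bm{j}}(v)=\qdet D_1(v)\cdot\qdet(D_2)_{\bm{i},\bm{j}}(v-m)=c_m(v)\cdot\qdet(D_2)_{\bm{i},\bm{j}}(v-m).
\]

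Finally, setting $v=u+m$ and using centrality of the coefficients of $c_m(u+m)$ in $Y_{m+N}$ (\cite[Theorem 2.13]{MNO96}) to move the factor across, the two steps combine to give
\[
\psi_m\bigl(\qdet T_{\bm{i},\bm{j}}(u)\bigr)=\qdet(D_2)_{\bm{i},\bm{j}}(u)=c_m(u+m)^{-1}\,\qdet T_{m\#\bm{i},m\#\bm{j}}(u+m),
\]
which is the claim. The main obstacle is the second step: extending the factorization of \cite[Theorem 8.6]{BK05} from the principal $T$-matrix of $Y_{m+N}$ to the non-principal submatrix $T_{m\#\bm{i},m\#\bm{j}}$ with the correct shift $v-m$. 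This hinges on the fact that entries of $D_1(v)$ commute with those of $(D_2)_{\bm{i},\bm{j}}$ in the Yangian — a standard consequence of the Gauss-decomposition commutation relations — and once this is in place the shift bookkeeping between $v-m$ and $u$ falls out of a quantum column expansion of the minor.
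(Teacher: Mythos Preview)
The paper does not give its own proof of this lemma; it is simply quoted from \cite[Lemma~8.5]{BK05}. Your outline follows the same route as Brundan--Kleshchev: identify $\psi_m(t_{ij}(u))$ with the Schur-complement entries $D_{2;i,j}(u)$ via the Gauss decomposition for the shape $(m,N)$, and then invoke a quantum Sylvester-type factorization of the enlarged minor. The first step is correct as you state it.

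Two points deserve correction. First, your appeal to centrality is wrong: the coefficients of $c_m(u)$ are central in $Y_m$, not in $Y_{m+N}$ (for instance $[c_m^{(1)},t_{m+1,1}^{(1)}]\neq 0$). Fortunately you do not need this: from $\qdet T_{m\#\bm{i},m\#\bm{j}}(v)=c_m(v)\cdot\qdet(D_2)_{\bm{i},\bm{j}}(v-m)$ one simply left-multiplies by $c_m(v)^{-1}$, no commutation required. Second, and more seriously, your ``second step'' is precisely the content of the lemma, and you have only sketched it. Saying that one applies ``the same argument underlying \cite[Theorem~8.6]{BK05}'' is not enough: that theorem treats the full $T$-matrix of a Yangian, whereas the submatrix $T_{m\#\bm{i},m\#\bm{j}}(v)$ does not itself satisfy Yangian relations, so one cannot invoke the block factorization of its quantum determinant for free. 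What actually drives the identity is the quantum Sylvester theorem (see e.g.\ \cite[Theorem~1.12.1]{Mol07} or the proof in \cite[\S8]{BK05}), which says that the matrix of $(m{+}1)$-minors built on the first $m$ rows and columns satisfies Yangian relations and has the multiplicative property you need. You should either cite that result directly or reproduce its inductive Laplace-expansion argument; the informal remark about commutation of $D_1$ and $D_2$ entries and ``shift bookkeeping'' does not substitute for it.
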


Let $\mu=(\mu_1,\dots,\mu_n)$ be a given composition of $N$ with length $n$.
For each $1\leq i\leq n$, we define the partial sum
\begin{align}\label{partial sum}
p_i(\mu):=\mu_1+\cdots+\mu_{i-1}.
\end{align}

The submatrices $\{D_i(u);~1\leq i\leq n\}$ are defined in \eqref{quasideterminant D} (see also \cite[(5.2)]{BK05}).
We would like to generalize (\ref{cnu=quantum det}) and (\ref{cnu=product of di}),
describing $c_N(u)$ in terms of the parabolic generators.
\begin{Proposition}\label{Prop: cnu parabolic}
For $\mu=(\mu_1,\dots,\mu_n)$ and $1\leq i\leq n$, we have
\[
\qdet~^{\mu}D_i(u-p_i(\mu))=\prod\limits_{k=p_i(\mu)+1}^{p_{i+1}(\mu)}d_{k}(u-k+1).
\]
In particular,
\[
c_N(u)=\qdet D_1(u-p_1(\mu))\qdet D_2(u-p_2(\mu))\dots \qdet D_n(u-p_n(\mu)).
\]
\end{Proposition}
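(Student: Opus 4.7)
The plan is to pass from the block-diagonal Gauss factor ${}^{\mu}D_a(u)$ to a quantum minor of $T(u)$ in $Y_N$ via the shift map, and then reduce that minor to a product of Drinfeld generators using \eqref{cnu=product of di}.

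The starting point will be \eqref{psi D1=Da}, which expresses ${}^{\mu}D_a(u)$ as $\psi_{p_a(\mu)}$ applied to the first Gauss block in $Y_{N-p_a(\mu)}$ for the truncated composition $(\mu_a,\ldots,\mu_n)$. Since the first diagonal Gauss block is literally the top-left submatrix of $T$, that first block is just the minor $T_{(1,\ldots,\mu_a),(1,\ldots,\mu_a)}(u)$ in $Y_{N-p_a(\mu)}$, and because $\psi_{p_a(\mu)}$ is an algebra homomorphism it commutes with $\qdet$. I will then apply Lemma \ref{Lemma psi qdet=} with $m=p_a(\mu)$, $\bm i=\bm j=(1,\ldots,\mu_a)$, and spectral parameter $u-p_a(\mu)$; since $m\#\bm i=(1,\ldots,p_{a+1}(\mu))$, this should yield
\[
\qdet\,{}^{\mu}D_a(u-p_a(\mu)) \;=\; Q_{p_a(\mu)}(u)^{-1}\,Q_{p_{a+1}(\mu)}(u),
\]
where I abbreviate $Q_k(u):=\qdet T_{(1,\ldots,k),(1,\ldots,k)}(u)\in Y_N[[u^{-1}]]$.

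Next I will factor $Q_k(u)$. The natural map $Y_k\hookrightarrow Y_N$ sending $t_{i,j}^{(r)}$ to $t_{i,j}^{(r)}$ for $1\le i,j\le k$ is a well-defined embedding, because the RTT relations \eqref{RTT relations} close under this restriction, and it carries $c_k(u)\in Y_k$ to $Q_k(u)\in Y_N$. Applying the Gauss decomposition to the top-left $k\times k$ block of $T(u)$ inside $Y_N$ and invoking its uniqueness, this embedding should also identify the Drinfeld-generator series $d_j^{Y_k}(u)$ with $d_j(u)$ of $Y_N$ for $j\le k$. Pushing forward \eqref{cnu=product of di} from $Y_k$ will then give $Q_k(u)=d_1(u)\,d_2(u-1)\cdots d_k(u-k+1)$ in $Y_N$. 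Since the $d_j(u)$'s pairwise commute, the previous ratio will telescope to
\[
\qdet\,{}^{\mu}D_a(u-p_a(\mu)) \;=\; \prod_{k=p_a(\mu)+1}^{p_{a+1}(\mu)} d_k(u-k+1),
\]
proving the first claim. The ``In particular'' assertion will follow by multiplying over $a=1,\ldots,n$ and using $p_1(\mu)=0$, $p_{n+1}(\mu)=N$, and $Q_N(u)=c_N(u)$.

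The main point requiring care is the identification $d_j^{Y_k}(u)=d_j^{Y_N}(u)$ under the sub-Yangian embedding, which is what lets \eqref{cnu=product of di} transfer between $Y_k$ and $Y_N$; the rest is aligning the spectral-parameter shifts between Lemma \ref{Lemma psi qdet=} and the definition \eqref{definition of qdet}, which should be routine.
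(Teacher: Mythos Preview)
Your proof is correct and follows essentially the same route as the paper's: reduce ${}^{\mu}D_a$ to the first Gauss block via the shift map \eqref{psi D1=Da}, apply Lemma~\ref{Lemma psi qdet=} to obtain the ratio $Q_{p_a(\mu)}(u)^{-1}Q_{p_{a+1}(\mu)}(u)$, and then factor each $Q_k(u)$ via \eqref{cnu=product of di} together with the commutativity of the $d_j(u)$'s. The only difference is that you spell out the sub-Yangian embedding $Y_k\hookrightarrow Y_N$ and the identification $d_j^{Y_k}(u)=d_j^{Y_N}(u)$ needed to apply \eqref{cnu=product of di} to the partial minors, whereas the paper leaves this implicit in the phrase ``a two-fold application of \eqref{cnu=product of di}''.
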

\begin{proof}
For $i\geq 1$, we let $\bar{\mu}:=(\mu_i,\mu_{i+1},\dots,\mu_n)$.
By applying the shift map (\ref{psi D1=Da}), we obtain
\[
^{\mu}D_i(u-p_i(\mu))=\psi_{p_i(\mu)}(^{\bar{\mu}}D_{1}(u-p_i(\mu))),
\]
where $^{\bar{\mu}}D_{1}(u-p_i(\mu))\in Y_{\mu_i+\cdots+\mu_{n}}$.
Note that (\ref{quasideterminant D}) implies $^{\bar{\mu}}D_{1}(u)\!=\!^{\bar{\mu}}T_{1,1}(u)$.
Setting $\bm{\mu_i}=(1,\dots,\mu_i)$, we have by Lemma \ref{Lemma psi qdet=} that
\begin{eqnarray*}
\qdet~^{\mu}D_i(u-p_i(\mu))&=&\psi_{h_i}(\qdet~^{\bar{\mu}}T_{1,1}(u-p_i(\mu)))\\
&=&c_{h_i}(u)^{-1}\qdet T_{p_i(\mu)\#\bm{\mu_i},p_i(\mu)\#\bm{\mu_i}}(u)
\end{eqnarray*}
Since the elements $\{d_i(u);~1\leq i\leq N\}$ commute (cf. \cite[(5.9)]{BK05}),
a two-fold application of (\ref{cnu=product of di}) readily yields
\[
\qdet~^{\mu}D_i(u-p_i(\mu))=\prod\limits_{k=p_i(\mu)+1}^{p_{i+1}(\mu)}d_{k}(u-k+1).
\]
The last assertion again  follows from \eqref{cnu=product of di}.
\end{proof}

\section{The center of $\Ymn$}\label{section:superyangian}
\subsection{The super Yangian $\Ymn(\fs^{st})$}\label{section standard 01-seq}
In this subsection, we always assume that $\Ymn=\Ymn(\fs^{st})$ is the super Yangian corresponding to the standard $0^M1^N$-sequence (see \eqref{standard sequence}).
Following \cite{Na91}, we define the {\it quantum Berezinian} ({\it superdeterminant}) of the matrix $T(u)$ as the following power series:
\begin{eqnarray}\label{bmnRTT}
\qsdet T(u)&:=&\sum_{\rho\in S_M}\sgn(\rho)t_{\rho(1),1}(u)t_{\rho(2),2}(u-1)\cdots t_{\rho(M),M}(u-M+1)\nonumber\\
&\times &\sum_{\sigma\in S_N}\sgn(\sigma)t'_{M+1,M+\sigma(1)}(u-M+1)\cdots t'_{M+N,M+\sigma(N)}(u-M+N)\nonumber\\
&=&1+\sum\limits_{r\geq 1}b_{M|N}^{(r)}u^{-1}.
\end{eqnarray}

The elements $\{b_{M|N}^{(r)};~r>0\}$ generate the center $Z(\Ymn)$ of $\Ymn$ (\cite[Theorem 4]{Gow07}).
By \cite[Theorem 1]{Gow05},
the quantum Berezinian can be written in terms of the Drinfeld generators as follows:
\begin{eqnarray}\label{bmndrinfeld}
b_{M|N}(u):=1+\sum\limits_{r\geq 1}b_{M|N}^{(r)}u^{-1}&=&d_1(u)d_2(u-1)\cdots d_M(u-M+1)\nonumber\\
&\times &d_{M+1}(u-M+1)^{-1}\cdots d_{M+N}(u-M+N)^{-1}.
\end{eqnarray}

Now let $\lambda$ be a composition of $M$ with length $m$ and $\nu$ be a composition of $N$ with length $n$.
We let
$\mu=(\lambda_1,\lambda_2,\dots,\lambda_m \mid \nu_1,\nu_2,\dots,\nu_n)$ denote the composition of $(M|N)$,
and we continue to use the notations $p_i(\lambda)$ and $p_i(\nu)$ for the partial sum of $\lambda$ and $\nu$ respectively,
as defined in \eqref{partial sum}.

\begin{Proposition}\label{Prop: bmnu parabolic1}
Associated to $\mu=(\lambda_1,\lambda_2,\dots,\lambda_m \mid \nu_1,\nu_2,\dots,\nu_n)$,
we have
\begin{eqnarray}\label{formula 1}
\qsdet T(u)=\qdet D_1(u-p_1(\lambda))\qdet D_2(u-p_2(\lambda))\dots \qdet D_m(u-p_m(\lambda))\nonumber\\
\times\qdet D_{m+1}'(u-M+p_2(\nu))\qdet D_{m+2}'(u-M+p_3(\nu))\dots \qdet D_{m+n}'(u-M+N).
\end{eqnarray}
\end{Proposition}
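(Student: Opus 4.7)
The plan is to combine Gow's Drinfeld-type formula \eqref{bmndrinfeld} for $\qsdet T(u)$ with Proposition \ref{Prop: cnu parabolic}, treating separately the ``even'' and ``odd'' halves of the composition $\mu=(\lambda|\nu)$. By \eqref{bmndrinfeld},
\[
\qsdet T(u)=A(u)\cdot B(u),\quad A(u):=\prod_{k=1}^{M}d_k(u-k+1),\quad B(u):=\prod_{k=1}^{N}d_{M+k}(u-M+k)^{-1},
\]
so the proof reduces to establishing the two identities
\[
A(u)=\prod_{i=1}^{m}\qdet D_i(u-p_i(\lambda))\qquad\text{and}\qquad B(u)=\prod_{j=1}^{n}\qdet D'_{m+j}(u-M+p_{j+1}(\nu)).
\]

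For $A(u)$, the key observation is that the subalgebra of $\Ymn(\fs^{st})$ generated by $\{t_{ij}^{(r)}\mid 1\le i,j\le M,\ r\ge 1\}$ is a copy of the usual Yangian $Y_M$: all these generators are even, so the RTT relations \eqref{RTT relations} reduce to the ordinary Yangian relations, and under this embedding $A(u)$ is identified with $c_M(u)$. Moreover, the parabolic generators $D_1,\dots,D_m$ of $\Ymn(\fs^{st})$ with respect to $(\lambda|\nu)$ coincide with those of $Y_M$ with respect to $\lambda$, because the Gauss decomposition of the top-left $M\times M$ block of $T(u)$ depends only on that block. Thus Proposition \ref{Prop: cnu parabolic} applied inside $Y_M$ yields the identity for $A(u)$ at once.

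For $B(u)$, I would adapt the argument of Proposition \ref{Prop: cnu parabolic} to the ``odd'' side. For each $j$, the shift-map identity \eqref{psi D1=Da} realizes $D_{m+j}(u)$ as the image under $\psi_{M+p_j(\nu)}$ of the top-left $\nu_j\times\nu_j$ block of the $T$-matrix of the sub-super-Yangian $Y_{0|N-p_j(\nu)}(1^{N-p_j(\nu)})$. All entries of this block are even elements (the relevant indices all have parity one, so their products have parity zero), hence $\qdet$ and matrix inversion behave entry-wise under the algebra homomorphism $\psi$. The identity for $B(u)$ then follows by iteratively applying Gow's formula \eqref{bmndrinfeld} to each sub-super-Yangian $Y_{0|N-p_j(\nu)}$, together with the commutativity of the diagonal Drinfeld generators $\{d_k(u)\}$ (used to rearrange the resulting product), in close parallel with the two-fold application of \eqref{cnu=product of di} at the end of the proof of Proposition \ref{Prop: cnu parabolic}.

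The main technical obstacle is the odd half: while the even part is a direct translation of Proposition \ref{Prop: cnu parabolic} to the embedded $Y_M$, the odd part requires a super analog of Lemma \ref{Lemma psi qdet=} that relates the shift map $\psi$ to the quantum determinant of an inverse submatrix, together with careful tracking of the spectral shift through the telescoping product. Once the even and odd identities are in place, multiplying them gives the claimed factorization.
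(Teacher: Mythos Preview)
Your treatment of the even half $A(u)$ is exactly the paper's argument: identify the subalgebra generated by $t_{ij}^{(r)}$ with $1\le i,j\le M$ with $Y_M$ and invoke Proposition~\ref{Prop: cnu parabolic}.

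For the odd half $B(u)$, however, the paper does \emph{not} proceed via shift maps into $Y_{0|N-p_j(\nu)}$ and a super analogue of Lemma~\ref{Lemma psi qdet=}. Instead it uses the swap map $\zeta_{N|M}:Y_{N|M}(\overleftarrow{\mu})\to Y_{M|N}(\mu)$ from Proposition~\ref{morphisms}(3). By \cite[(4.6)]{Peng11}, each $\qdet D'_{m+j}(u-M+p_{j+1}(\nu))$ is the image under $\zeta_{N|M}$ of a factor $\qdet D_{n+1-j}(u-M+p_{j+1}(\nu))$ computed inside the ordinary Yangian $Y_N\subset Y_{N|M}(\overleftarrow{\mu})$; commutativity lets one reorder these factors, and Proposition~\ref{Prop: cnu parabolic} then collapses the product to $c_N(u-M+N)$. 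Finally \cite[(11)]{Gow07} gives $\zeta_{N|M}(c_N(u-M+N))=\prod_k d_{M+k}(u-M+k)^{-1}$.

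The advantage of the paper's route is that the inversion $D\mapsto D'$ and the sign changes coming from the odd parities are absorbed in a single stroke by $\zeta_{N|M}=\rho_{N|M}\circ\omega_{N|M}$; one never has to prove a super version of Lemma~\ref{Lemma psi qdet=} or analyze $\qdet$ of an inverse block inside $Y_{0|k}$ (where the RTT relations carry an overall minus sign and the standard $\qdet$ identities do not apply verbatim). Your approach is not wrong in spirit, but the ``technical obstacle'' you flag is real, and the swap map is precisely the device that circumvents it.
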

\begin{proof}
For convenience, let us write
\[b_1(u):=\qdet D_1(u-p_1(\lambda))\qdet D_2(u-p_2(\lambda))\dots \qdet D_m(u-p_m(\lambda))\]
and
\[b_2(u):=\qdet D_{m+1}'(u-M+p_2(\nu))\qdet D_{m+2}'(u-M+p_3(\nu))\dots \qdet D_{m+n}'(u-M+N),\]
that is $b_1(u)$ (resp. $b_2(u)$) is the first (resp. second) part of the expression in \eqref{formula 1}.

It is clear that $b_1(u)$ is an element of the subalgebra of $\Ymn$ generated by the set $\{t_{i,j}^{(r)};~1\leq i,j\leq M,~r>0\}$.
The subalgebra is isomorphic to the Yangian $Y_M$ by the standard inclusion map $Y_M\hookrightarrow \Ymn$
which sends each generator $t_{i,j}^{(r)}$ to the generator of the same name in $\Ymn$.
Thus, Proposition \ref{Prop: cnu parabolic} in conjunction with \eqref{cnu=product of di}  gives
\[b_1(u)=d_1(u)d_2(u-1)\cdots d_M(u-M+1).\]

Now consider
\[\overleftarrow{\mu}:=(\nu_n,\dots,\nu_1\mid \lambda_m,\dots,\lambda_{1}),\]
the reverse of $\mu$, which is a composition of $(N|M)$.
Owing to \cite[(4.6)]{Peng11},
$b_2(u)$ is the image under the isomorphism $\zeta_{N|M}:Y_{N|M}(\overleftarrow{\mu})\rightarrow\Ymn(\mu)$
of
\[\qdet D_n(u-M+p_2(\nu))\qdet D_{n-1}(u-M+p_3(\nu))\cdots\qdet D_1(u-M+p_{n+1}(\nu)).\]
In view of \cite[(7.7)]{Peng11},
the order of the product here is irrelevant.
By using again Proposition \ref{Prop: cnu parabolic} and \eqref{cnu=product of di}, we obtain
\[b_2(u)=\zeta_{N|M}(c_N(u-M+N))=\zeta_{N|M}(d_1(u-M+N)d_2(u-M+N-1)\cdots d_N(u-M+1)),\]
and \cite[(11)]{Gow07} implies
\[b_2(u)=d_{M+1}(u-M+1)^{-1}\cdots d_{M+N}(u-M+N)^{-1}.\]
Then our assertion follows immediately from \eqref{bmndrinfeld}.
\end{proof}

\subsection{Arbitrary $01$-sequences}
In this section, when necessary, we will add an additional superscsript $\fs$ to our notation to avoid any ambiguity as $\fs$ varies.
We first consider the particular case $M=N=1$.

\begin{Lemma}\label{relations in Y11s}
The following relations hold in $Y_{1|1}(01)[[u^{-1}]]$:
\begin{align}\label{Y11-1}
t_{1,1}(u)t_{2,1}(u-1)=t_{2,1}(u)t_{1,1}(u-1),
\end{align}
\begin{align}\label{Y11-2}
t_{2,2}(u)t_{2,1}(u-1)=t_{2,1}(u)t_{2,2}(u-1),
\end{align}
\begin{align}\label{Y11-3}
t_{2,1}(u)t_{2,1}(u-1)=0,
\end{align}
\begin{align}\label{Y11-4}
t_{1,1}(u)t_{2,2}(u-1)-t_{2,2}(u)t_{1,1}(u-1)=t_{1,2}(u)t_{2,1}(u-1)+t_{2,1}(u)t_{1,2}(u-1).
\end{align}
\end{Lemma}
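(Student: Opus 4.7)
All four identities are consequences of the defining relation \eqref{tiju tklu relation} at the specialisation $v=u-1$. The point is to first clear the formal denominator by rewriting \eqref{tiju tklu relation} as
\[
(u-v)\,[t_{i,j}(u),t_{k,l}(v)] \;=\; (-1)^{|i||j|+|i||k|+|j||k|}\bigl(t_{k,j}(u)t_{i,l}(v)-t_{k,j}(v)t_{i,l}(u)\bigr),
\]
a genuine identity in $Y_{1|1}(01)[u^{\pm1}][[v^{-1}]]$ (so that $v$ may be substituted freely), and then to put $v=u-1$, turning the prefactor $(u-v)$ into $1$. Throughout, keep in mind that for $\fs=01$ we have $|1|=0$, $|2|=1$, so $t_{1,1},t_{2,2}$ are even while $t_{1,2},t_{2,1}$ are odd; the bracket on the left is a supercommutator, which becomes an ordinary commutator or an anticommutator accordingly.

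\textbf{Steps for \eqref{Y11-1} and \eqref{Y11-3}.} For \eqref{Y11-1}, take $(i,j,k,l)=(1,1,2,1)$; the sign is $+1$ and the bracket is an ordinary commutator. Setting $v=u-1$ yields
\[
t_{1,1}(u)t_{2,1}(u-1)-t_{2,1}(u-1)t_{1,1}(u)=t_{2,1}(u)t_{1,1}(u-1)-t_{2,1}(u-1)t_{1,1}(u),
\]
which cancels to \eqref{Y11-1}. For \eqref{Y11-3}, take $(i,j,k,l)=(2,1,2,1)$; now the sign is $-1$ and both entries are odd, so the left-hand bracket is the anticommutator $t_{2,1}(u)t_{2,1}(v)+t_{2,1}(v)t_{2,1}(u)$. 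Setting $v=u-1$ gives $2\,t_{2,1}(u)t_{2,1}(u-1)=0$, hence \eqref{Y11-3}.

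\textbf{Steps for \eqref{Y11-2}.} This requires two choices of indices. From $(i,j,k,l)=(2,2,2,1)$ (sign $-1$, commutator) at $v=u-1$, write the resulting identity as
\[
\mathrm{(A)}:\quad 2\,t_{2,2}(u)t_{2,1}(u-1)-t_{2,1}(u-1)t_{2,2}(u)=t_{2,2}(u-1)t_{2,1}(u).
\]
From $(i,j,k,l)=(2,1,2,2)$ (sign $-1$, commutator) at $v=u-1$, obtain
\[
\mathrm{(B)}:\quad 2\,t_{2,1}(u)t_{2,2}(u-1)-t_{2,2}(u-1)t_{2,1}(u)=t_{2,1}(u-1)t_{2,2}(u).
\]
Substituting $t_{2,2}(u-1)t_{2,1}(u)$ from (A) into (B) immediately produces $t_{2,1}(u)t_{2,2}(u-1)=t_{2,2}(u)t_{2,1}(u-1)$, which is \eqref{Y11-2}.

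\textbf{Steps for \eqref{Y11-4} and the main obstacle.} The most delicate identity, \eqref{Y11-4}, mixes the two diagonals with the two off-diagonals, so one needs two RTT specialisations. Use $(i,j,k,l)=(2,1,1,2)$ (sign $+1$, anticommutator on the left) at $v=u-1$ to get
\[
\mathrm{(E)}:\quad t_{2,1}(u)t_{1,2}(u-1)+t_{1,2}(u-1)t_{2,1}(u)=t_{1,1}(u)t_{2,2}(u-1)-t_{1,1}(u-1)t_{2,2}(u),
\]
and $(i,j,k,l)=(2,2,1,1)$ (sign $-1$, commutator) at $v=u-1$ to get
\[
\mathrm{(D)}:\quad t_{2,2}(u)t_{1,1}(u-1)-t_{1,1}(u-1)t_{2,2}(u)=-t_{1,2}(u)t_{2,1}(u-1)+t_{1,2}(u-1)t_{2,1}(u).
\]
Subtracting (D) from (E) makes the two terms $t_{1,1}(u-1)t_{2,2}(u)$ and $t_{1,2}(u-1)t_{2,1}(u)$ cancel, yielding exactly \eqref{Y11-4}. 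The only thing that could go wrong is the parity/sign bookkeeping in \eqref{tiju tklu relation}, especially the distinction between commutator and anticommutator on the left side when one or both of the entries are odd, and the matching $(-1)^{|i||j|+|i||k|+|j||k|}$ prefactor on the right; once these are tracked carefully, each identity drops out of the substitution $v=u-1$ after at most one linear combination.
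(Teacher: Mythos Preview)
Your proof is correct and follows essentially the same approach as the paper's: each identity is obtained by specialising the RTT relation \eqref{tiju tklu relation} at $v=u-1$ for one or two well-chosen index tuples and then simplifying. The only cosmetic difference is in \eqref{Y11-4}, where you use the pairs $(i,j,k,l)=(2,1,1,2)$ and $(2,2,1,1)$ while the paper uses $(1,1,2,2)$ and $(1,2,2,1)$; both routes produce two auxiliary identities whose combination eliminates the unwanted cross terms $t_{1,1}(u-1)t_{2,2}(u)$ and $t_{1,2}(u-1)t_{2,1}(u)$ (in your write-up this cancellation becomes transparent once you solve (E) for $t_{1,1}(u)t_{2,2}(u-1)$ and (D) for $t_{2,2}(u)t_{1,1}(u-1)$ before subtracting).
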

\begin{proof}
These follow from the defining relation.
Setting $(i,j,k,l)=(1,1,2,1)$ and $v:=u-1$,
we have by \eqref{tiju tklu relation} that
\[[t_{1,1}(u),t_{2,1}(u-1)]=t_{2,1}(u)t_{1,1}(u-1)-t_{2,1}(u-1)t_{1,1}(u).\]
This immediately implies \eqref{Y11-1}.
For \eqref{Y11-2}, we set $(i,j,k,l)=(2,2,2,1)$ and $(i,j,k,l)=(2,1,2,2)$ in \eqref{tiju tklu relation}, respectively.
Then we obtain
\[[t_{2,2}(u),t_{2,1}(u-1)]=t_{2,2}(u-1)t_{2,1}(u)-t_{2,2}(u)t_{2,1}(u-1),\]
and
\[[t_{2,1}(u),t_{2,2}(u-1)]=t_{2,1}(u-1)t_{2,2}(u)-t_{2,1}(u)t_{2,2}(u-1).\]
Thus,
\[2t_{2,2}(u)t_{2,1}(u-1)=t_{2,2}(u-1)t_{2,1}(u)+t_{2,1}(u-1)t_{2,2}(u)=2t_{2,1}(u)t_{2,2}(u-1).\]
This yields \eqref{Y11-2}.
We note the following relation
\[[t_{2,1}(u),t_{2,1}(u-1)]=t_{2,2}(u-1)t_{2,1}(u)-t_{2,2}(u)t_{2,1}(u-1).\]
Since both $t_{2,1}(u)$ and $t_{2,1}(u-1)$ are odd,
the equation \eqref{Y11-3} follows directly from above.
To establish the relation \eqref{Y11-4},
set $(i,j,k,l)=(1,1,2,2)$ and $v:=u-1$ in \eqref{tiju tklu relation}, simplify, we have
\begin{align}\label{Y11-5}
t_{1,1}(u)t_{2,2}(u-1)-t_{2,2}(u-1)t_{1,1}(u)=t_{2,1}(u)t_{1,2}(u-1)-t_{2,1}(u-1)t_{1,2}(u).
\end{align}
One obtains similarly
\begin{align}\label{Y11-6}
t_{2,2}(u-1)t_{1,1}(u)-t_{2,2}(u)t_{1,1}(u-1)=t_{1,2}(u)t_{2,1}(u-1)+t_{2,1}(u-1)t_{1,2}(u).
\end{align}
The relation \eqref{Y11-4} now follows from \eqref{Y11-5} and \eqref{Y11-6}.
\end{proof}
As observed in \cite{Peng14}, the definition of $Y_{M|N}$ is independent of the choices of $\fs$.
It is clear that any re-labeling map $t_{i,j}(u)\mapsto t_{\sigma(i),\sigma(j)}(u)$,
preserving the partities of the generators, where $\sigma$ is a permutation in the symmetric group $S_{M+N}$,
extends to an isomorphism between the respective super Yangians.

In particular, the super Yangian $Y_{1|1}(01)$ and $Y_{1|1}(10)$ are isomorphic via the assignment
$t_{i,j}^{10}(u)\mapsto t_{3-i,3-j}^{01}(u)$.
Denote by $\sigma_{1}$ the corresponding isomorphism.

The following two results have appeared in \cite[Section 3.3]{Lu22} (see also \cite[Remark 3.11]{Lu21}, \cite[Proposition 3.6]{HM20} and \cite[Section 4.3]{LM21}).
Here we provide a detailed proof.

\begin{Lemma}\label{Y11d-iso}
Let $\sigma_{1}: Y_{1|1}(10)\rightarrow Y_{1|1}(01)$ be the algebra isomorphism defined above.
Then
\[\sigma_{1}(d_1^{10}(u)^{-1}d_2^{10}(u))=d_1^{01}(u-1)d_2^{01}(u-1)^{-1}.\]
\end{Lemma}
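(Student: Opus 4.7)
The plan is to unwind both sides into RTT generators of $Y_{1|1}(01)$ and to verify the resulting identity using the four commutation relations established in Lemma \ref{relations in Y11s}. The Gauss decomposition with $\mu=(1,1)$ in $Y_{1|1}(10)$ gives $d_1^{10}(u) = t_{1,1}^{10}(u)$ and $d_2^{10}(u) = t_{2,2}^{10}(u) - t_{2,1}^{10}(u) t_{1,1}^{10}(u)^{-1} t_{1,2}^{10}(u)$. Applying the re-labeling isomorphism $\sigma_1: t_{i,j}^{10}(u) \mapsto t_{3-i,3-j}^{01}(u)$ and introducing the shorthand $A := t_{1,1}^{01}(u)$, $B := t_{1,2}^{01}(u)$, $C := t_{2,1}^{01}(u)$, $D := t_{2,2}^{01}(u)$, with primes denoting the same series evaluated at $u-1$, one obtains $\sigma_1(d_1^{10}(u)^{-1} d_2^{10}(u)) = D^{-1}(A - BD^{-1}C)$. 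On the other hand $d_1^{01}(u-1) d_2^{01}(u-1)^{-1} = A' (D' - C' A'^{-1} B')^{-1}$.

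After clearing denominators, the claim becomes equivalent to
$$(A - BD^{-1}C)(D' - C'A'^{-1}B') = DA'.$$
I would expand the left side into four terms. The term $BD^{-1}C \cdot C' A'^{-1} B'$ vanishes by \eqref{Y11-3}, since $CC' = 0$. The relation \eqref{Y11-2} gives $CD' = DC'$, whence $BD^{-1}CD' = BC'$; the relation \eqref{Y11-1} gives $AC' = CA'$, whence $AC'A'^{-1}B' = CB'$. What remains is
$$AD' - BC' - CB' = DA',$$
which is exactly \eqref{Y11-4}.

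The main obstacle is the careful noncommutative bookkeeping and recognizing that the four relations in Lemma \ref{relations in Y11s} are tailor-made to collapse the three nonvanishing cross terms produced by the expansion. Once this correspondence is established, the identity reduces to a single rearrangement of \eqref{Y11-4} and the proof is complete.
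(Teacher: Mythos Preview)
Your proof is correct and follows the same strategy as the paper's: both unwind the Drinfeld generators via the quasideterminant formula, apply $\sigma_1$, clear denominators to reduce to the identity $(A - BD^{-1}C)(D' - C'A'^{-1}B') = DA'$, and then invoke the relations of Lemma~\ref{relations in Y11s}. The paper simply asserts ``this follows from \eqref{Y11-1}--\eqref{Y11-4}'' at that last step, whereas you have spelled out exactly how each relation collapses the corresponding cross term; your write-up is thus a faithful and more detailed version of the same argument.
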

\begin{proof}
The definition of quasideterminants \eqref{quasideterminant D} implies that
\[d_1^{10}(u)^{-1}d_2^{10}(u)=t_{1,1}^{10}(u)^{-1}(t_{2,2}^{10}(u)-t_{2,1}^{10}(u)t_{1,1}^{10}(u)^{-1}t_{1,2}^{10}(u)),\]
so that
\[\sigma_{1}(d_1^{10}(u)^{-1}d_2^{10}(u))=t_{2,2}^{01}(u)^{-1}(t_{1,1}^{01}(u)-t_{1,2}^{01}(u)t_{2,2}^{01}(u)^{-1}t_{2,1}^{01}(u)).\]
Then it is equivalent to show that
\begin{eqnarray*}
t_{2,2}^{01}(u)t_{1,1}^{01}(u-1)&=&(t_{1,1}^{01}(u)-t_{1,2}^{01}(u)t_{2,2}^{01}(u)^{-1}t_{2,1}^{01}(u))\\
&&\times(t_{2,2}^{01}(u-1)-t_{2,1}^{01}(u-1)t_{1,1}^{01}(u-1)^{-1}t_{1,2}^{01}(u-1)).
\end{eqnarray*}
This follows from \eqref{Y11-1}-\eqref{Y11-4}.
\end{proof}

More generally, let $\fs=(s_1\cdots s_{M+N})$ be a $01$-sequence.
Assume that $\fs\neq \fs^{st}$,
i.e., $\fs$ is not the standard $01$-sequence defined in \eqref{standard sequence}.
Then there exists $1\leq i\leq M+N-1$ such that $(s_is_{i+1})=(10)$.
Fix $i$ with $(s_is_{i+1})=(10)$ and let $\bar\fs$ be the $01$-sequence obtained from $\fs$ by switching $s_i$ and $s_{i+1}$.
We have the following isomorphism:
\begin{align}\label{sigmai}
\sigma_i: \Ymn(\fs)\rightarrow\Ymn(\bar\fs);~t_{k,l}^{\fs}(u)\mapsto t_{\sigma_i(k),\sigma_i(l)}^{\bar\fs}(u),
\end{align}
where  $\sigma_i$ is the simple reflection $(i,i+1)$ in the symmetric group $S_{M+N}$ and we still denote the induced isomorphism by $\sigma_i$.

\begin{Remark}
As pointed out to us by the referee, the above isomorphism \eqref{sigmai} is exactly the odd reflection considered in
\cite{Mol22} and \cite{Lu22},
in those papers they are focused on the representation theory of $\Ymn(\fs)$.
\end{Remark}

\begin{Proposition}\label{YMNd-iso}
We have
\[\sigma_{i}(d_i^{\fs}(u)^{-1}d_{i+1}^{\fs}(u))=d_i^{\bar\fs}(u-1)d_{i+1}^{\bar\fs}(u-1)^{-1}.\]
\end{Proposition}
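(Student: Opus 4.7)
The plan is to reduce the proposition to Lemma \ref{Y11d-iso} by embedding $Y_{1|1}(10)$ into $\Ymn(\fs)$ and $Y_{1|1}(01)$ into $\Ymn(\bar\fs)$ in a way that is compatible with the relabeling isomorphisms $\sigma_1$ and $\sigma_i$ and that sends $d_1,d_2$ to $d_i,d_{i+1}$.

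Let $\fs^{(i)}:=s_is_{i+1}\cdots s_{M+N}$ be the suffix of $\fs$ starting at position $i$; by hypothesis it begins with $10$. Write $(M',N')$ for the number of $0$s and $1$s in $\fs^{(i)}$, and let $\bar\fs^{(i)}$ be the analogous suffix of $\bar\fs$, which begins with $01$. First I would introduce the shift maps $\psi:=\psi_{i-1}\colon Y_{M'|N'}(\fs^{(i)})\hookrightarrow\Ymn(\fs)$ and $\psi^{\bar\fs}\colon Y_{M'|N'}(\bar\fs^{(i)})\hookrightarrow\Ymn(\bar\fs)$. Since the RTT relations \eqref{tiju tklu relation} restricted to indices in $\{1,2\}$ involve only the $t$'s at those two positions, and since the parities at the first two positions of $\fs^{(i)}$ (respectively $\bar\fs^{(i)}$) match those of $Y_{1|1}(10)$ (respectively $Y_{1|1}(01)$), the assignments $t_{k,l}^{10}(u)\mapsto t_{k,l}^{\fs^{(i)}}(u)$ and $t_{k,l}^{01}(u)\mapsto t_{k,l}^{\bar\fs^{(i)}}(u)$ extend to algebra homomorphisms $\iota\colon Y_{1|1}(10)\to Y_{M'|N'}(\fs^{(i)})$ and $\iota^{\bar\fs}\colon Y_{1|1}(01)\to Y_{M'|N'}(\bar\fs^{(i)})$. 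Set $\Phi:=\psi\circ\iota$ and $\Phi^{\bar\fs}:=\psi^{\bar\fs}\circ\iota^{\bar\fs}$.

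The core of the plan rests on two compatibilities. First, $\Phi$ and $\Phi^{\bar\fs}$ send Drinfeld generators as expected: $\Phi(d_k^{10}(u))=d_{i+k-1}^{\fs}(u)$ and $\Phi^{\bar\fs}(d_k^{01}(u))=d_{i+k-1}^{\bar\fs}(u)$ for $k\in\{1,2\}$. This follows because the Gauss-decomposition formulas $d_1(u)=t_{1,1}(u)$ and $d_2(u)=t_{2,2}(u)-t_{2,1}(u)t_{1,1}(u)^{-1}t_{1,2}(u)$ hold in every super Yangian, so $\iota(d_k^{10}(u))=d_k^{\fs^{(i)}}(u)$, and then applying \eqref{psi D1=Da} with $\mu=(1^{M+N})$ together with the functoriality $\psi_m\circ\psi_n=\psi_{m+n}$ of shift maps yields $\psi(d_k^{\fs^{(i)}}(u))=d_{i+k-1}^{\fs}(u)$. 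Second, I would establish the intertwining
\[
\sigma_i\circ\Phi=\Phi^{\bar\fs}\circ\sigma_1.
\]
This can be verified on the RTT generators $t_{k,l}^{10}(u)$ with $k,l\in\{1,2\}$ using the quasideterminant formula \eqref{psit}: both sides produce the same $i\times i$ quasideterminant, with padding at indices $1,\dots,i-1$ (which $\sigma_i$ fixes) and with $\sigma_i$'s swap $i\leftrightarrow i+1$ on the framing row and column mirroring the effect of $\sigma_1$ on the boxed entry.

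Applying the two compatibilities to $x:=d_1^{10}(u)^{-1}d_2^{10}(u)\in Y_{1|1}(10)$ and invoking Lemma \ref{Y11d-iso} will then give
\[
\sigma_i\bigl(d_i^{\fs}(u)^{-1}d_{i+1}^{\fs}(u)\bigr)=\sigma_i(\Phi(x))=\Phi^{\bar\fs}(\sigma_1(x))=\Phi^{\bar\fs}\bigl(d_1^{01}(u-1)d_2^{01}(u-1)^{-1}\bigr)=d_i^{\bar\fs}(u-1)d_{i+1}^{\bar\fs}(u-1)^{-1},
\]
as desired. The main obstacle is verifying the intertwining $\sigma_i\circ\Phi=\Phi^{\bar\fs}\circ\sigma_1$; although conceptually clear, it requires careful tracking of the action of $\sigma_i$ through the nested quasideterminants in \eqref{psit}.
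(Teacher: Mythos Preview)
Your proposal is correct and follows essentially the same approach as the paper: reduce to Lemma~\ref{Y11d-iso} via the shift map $\psi_{i-1}$ and verify the intertwining $\sigma_i\circ\psi_{i-1}=\psi_{i-1}\circ\sigma_1$ using the quasideterminant description \eqref{psit}. The only cosmetic difference is that the paper first treats $i=1$ separately (via the standard embedding $Y_{1|1}(10)\hookrightarrow\Ymn(\fs)$) and then reduces $i>1$ to that case inside $Y_{\fs_2}$, whereas you combine the shift map and the standard embedding into a single composite $\Phi$; the content is the same.
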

\begin{proof}
If $i=1$, then $(s_1s_2)=(10)$.
There is a standard embedding $Y_{1|1}(10)\hookrightarrow Y_{M|N}(\fs)$ under which $t_{i,j}(u)\in Y_{1|1}(10)$ maps to the element with the same name in $Y_{M|N}(\fs)$.
In this case, Lemma \ref{Y11d-iso} yields the assertion.

Assume that $i>1$. We put $\fs_1:=(s_1\cdots s_{i-1})$ and $\fs_2:=(s_i\cdots s_{M+N})$,
so that $\fs=\fs_1\fs_2$.
Moreover, we let $\bar\fs_2$ be the $01$-sequence obtained from $\fs_2$ by switching $s_i$ and $s_{i+1}$.
Clearly, $\bar\fs=\fs_1\bar\fs_2$.
A two-fold application of the shift map (see \eqref{psi D1=Da}) in conjunction with \eqref{psit} yields
\begin{align*}
\sigma_{i}(d_i^{\fs}(u)^{-1}d_{i+1}^{\fs}(u))&=\sigma_i(\psi_{i-1}(d_1^{\fs_2}(u)^{-1}d_{2}^{\fs_2}(u)))\\
&=\psi_{i-1}(\sigma_1(d_1^{\fs_2}(u)^{-1}d_{2}^{\fs_2}(u)))\\
&=\psi_{i-1}(d_1^{\bar\fs_2}(u-1)^{-1}d_{2}^{\bar \fs_2}(u-1))\\
&=d_i^{\bar\fs}(u-1)^{-1}d_{i+1}^{\bar \fs}(u-1).
\end{align*}
\end{proof}

Given  a $01$-sequence $\fs=(s_1\cdots s_{M+N})$,
there exists $1\leq i_1<i_2<\cdots<i_M\leq M+N$ and $1\leq j_1<j_2<\cdots<j_N\leq M+N$
such that $s_{i_k}=0$ and $s_{j_l}=1$ for all $1\leq k\leq M$ and $1\leq l\leq N$.
Obviously, the $i's$ and $j's$ are uniquely determined by $\fs$.
\begin{Definition}
{\it The quantum Berezinian (superdeterminant) of the matrix $T^{\fs} (u)$ is defined via}
\begin{eqnarray}\label{qsdet-arbitrary}
\qsdet T^{\fs}(u):=&\sum\limits_{\rho\in S_M}\sgn(\rho)t_{i_{\rho(1)},i_1}(u)t_{i_{\rho(2)},i_2}(u-1)\cdots t_{i_{\rho(M)},i_M}(u-M+1)\nonumber\\
&\times\sum\limits_{\sigma\in S_N}\sgn(\sigma)t'_{j_1,j_{\sigma(1)}}(u-M+1)\cdots t'_{j_N,j_{\sigma(N)}}(u-M+N).
\end{eqnarray}
\end{Definition}

\begin{Remark}
(1) When $\fs=\fs^{st}$ is the standard $01$-sequence,
we have $i_k=k$ and $j_l=M+l$ for all $1\leq k\leq M$ and $1\leq l\leq N$,
so that \eqref{qsdet-arbitrary} recovers Nazarov's original definition \eqref{bmnRTT}.

(2) For any permutations $\theta \in S_M$ and $\tau \in S_N$, we have by \cite[Remark 2.8]{MNO96} that
\begin{eqnarray}\label{permutation qsdet}
\qsdet T^{\fs}(u)=&\sgn(\theta)\sum\limits_{\rho\in S_M}\sgn(\rho)t_{i_{\rho(1)},i_{\theta(1)}}(u)t_{i_{\rho(2)},i_{\theta(2)}}(u-1)\cdots t_{i_{\rho(M)},i_{\theta(M)}}(u-M+1)\nonumber\\
&\times\sgn(\tau)\!\!\!\sum\limits_{\sigma\in S_N}\sgn(\sigma)t'_{j_{\tau(1)},j_{\sigma(1)}}(u-M+1)\cdots t'_{j_{\tau(N)},j_{\sigma(N)}}(u-M+N).
\end{eqnarray}

\end{Remark}

Observe that the permutation $\sigma_\fs\in S_{M+N}$ defined by $i_k\mapsto k, j_l\mapsto M+l;~1\leq k\leq M,~1\leq l\leq N$ provides
an isomorphism (which is still denoted by $\sigma_\fs$) :
\begin{align}\label{sigmas}
\sigma_\fs:\Ymn(\fs)\rightarrow\Ymn(\fs^{st});~t_{i,j}^{\fs}(u)\mapsto t_{\sigma_{\fs}(i),\sigma_{\fs}(j)}^{\fs^{st}}(u).
\end{align}
In particular,
\begin{align}\label{sigmasqsdet=qsdet}
\sigma_\fs(\qsdet T^\fs(u))=\qsdet T^{\fs^{st}}(u).
\end{align}
This in combination with \cite[Theorem 4]{Gow07} (see Section \ref{section standard 01-seq}) yields that
the coefficients of the quantum Berezinian $\qsdet T^\fs(u)$ generate the center $Z(\Ymn(\fs))$~of $\Ymn(\fs)$.

Following \cite{Tsy20}, define
\begin{align}\label{expression of bmns}
b_{M|N}^{\fs}(u):=1+\sum\limits_{r\geq 1}b^{\fs(r)}u^{-r}=\tilde{d}_1(u_1)\tilde{d}_2(u_2)\cdots\tilde{d}_{M+N}(u_{M+N}),
\end{align}
where
\begin{align}\label{definition:tilde d}
\tilde{d}_i(u):=
                \left\{
                        \begin{aligned}
                        &d_i(u)\ \ \ \ \text{if}~s_i=0\\
                        &d_i(u)^{-1}~\text{if}~s_i=1
                        \end{aligned}
                \right.,
 \end{align}
while
\begin{align}\label{definition:ui}
u_1=u+s_1
~\text{and}~u_{i+1}=
              \left\{
                        \begin{aligned}
                        &u_i-1\ \ \ \ \text{if}~s_i=s_{i+1}=0\\
                         &u_i+1\ \ \ \ \text{if}~s_i=s_{i+1}=1\\
                         &u_i\ \ \ \ \ \ \ \ \ \ \ \text{if}~s_i\neq s_{i+1}\\
                        \end{aligned}
                \right..
  \end{align}
Note that our definition of $u_1$ is slightly different from that given in \cite[(2.41)]{Tsy20}.

The following result was obtained by Tsymbaliuk \cite[Theorem 2.43]{Tsy20},
who used the defining relations of the Drinfeld-type presentation of $\Ymn(\fs)$ \cite{Gow07}, \cite{Peng16}.
\begin{Theorem}\label{Theorem:bmn=qsdet}
We have $b_{M|N}^{\fs}(u)=\qsdet T^{\fs}(u)$.
In particular, the elements $\{b^{\fs(r)};~r\geq 1\}$ are central.
\end{Theorem}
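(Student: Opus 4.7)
The plan is to prove $b_{M|N}^{\fs}(u)=\qsdet T^{\fs}(u)$ by induction on the number of inversions of $\fs$, equivalently on the length $\ell(\sigma_\fs)$ of the permutation $\sigma_\fs\in S_{M+N}$ from \eqref{sigmas}. \emph{Base case:} when $\fs=\fs^{st}$, unwinding the recursion \eqref{definition:ui} yields $u_k=u-k+1$ for $1\leq k\leq M$ and $u_{M+l}=u-M+l$ for $1\leq l\leq N$, so $b_{M|N}^{\fs^{st}}(u)$ is literally the right-hand side of Gow's formula \eqref{bmndrinfeld}; equivalently, this is the specialization $\mu=(1^M\mid 1^N)$ of Proposition \ref{Prop: bmnu parabolic1}, which gives $\qsdet T^{\fs^{st}}(u)=b_{M|N}^{\fs^{st}}(u)$ directly.

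\emph{Inductive step:} Choose $i$ with $(s_is_{i+1})=(10)$ and let $\bar\fs$ be obtained by swapping these two entries, so $\ell(\sigma_{\bar\fs})=\ell(\sigma_\fs)-1$. The induction hypothesis gives $b_{M|N}^{\bar\fs}(u)=\qsdet T^{\bar\fs}(u)$, so via the isomorphism $\sigma_i\colon\Ymn(\fs)\to\Ymn(\bar\fs)$ from \eqref{sigmai} it suffices to verify
\[
\sigma_i\bigl(\qsdet T^{\fs}(u)\bigr)=\qsdet T^{\bar\fs}(u)\qquad\text{and}\qquad\sigma_i\bigl(b_{M|N}^{\fs}(u)\bigr)=b_{M|N}^{\bar\fs}(u).
\]
The first identity is a relabeling check: the sorted index sets $(i_k^{\fs}),(j_l^{\fs})$ for $\fs$ and $(i_k^{\bar\fs}),(j_l^{\bar\fs})$ for $\bar\fs$ coincide except that for some $p,q$ one has $i+1=i_p^{\fs}\leftrightarrow i=i_p^{\bar\fs}$ and $i=j_q^{\fs}\leftrightarrow i+1=j_q^{\bar\fs}$, and $\sigma_i$ implements exactly this exchange on the RTT generators and, since $\sigma_i$ is an algebra isomorphism hence commutes with matrix inversion, on the entries $t'_{a,b}$ as well.

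The second identity is the technical core. A direct computation from \eqref{definition:ui} shows that $u_k^{\fs}=u_k^{\bar\fs}$ for $k\neq i,i+1$ and $u_i^{\fs}=u_{i+1}^{\fs}=u_i^{\bar\fs}+1=u_{i+1}^{\bar\fs}+1$. Next, one checks that $\sigma_i(d_k^{\fs}(u))=d_k^{\bar\fs}(u)$ for $k\neq i,i+1$: when $k\leq i-1$ every $t_{a,b}^{\fs}$ appearing in the quasideterminant expression \eqref{quasideterminant D} for $d_k^{\fs}(u)$ has $a,b<i$, so $\sigma_i$ acts trivially; when $k\geq i+2$, $d_k^{\fs}(u)$ is the $(k,k)$-quasideterminant of the upper-left $k\times k$ block of $T^{\fs}(u)$, which is invariant under any simultaneous row-column permutation of rows/columns distinct from $k$, in particular under the transposition of $i$ and $i+1$ induced by $\sigma_i$. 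Finally, Proposition \ref{YMNd-iso} handles the central pair: since $s_i^{\fs}=1,\,s_{i+1}^{\fs}=0$ and $u_{i+1}^{\fs}=u_i^{\fs}$,
\[
\sigma_i\bigl(\tilde{d}_i^{\fs}(u_i^{\fs})\tilde{d}_{i+1}^{\fs}(u_{i+1}^{\fs})\bigr)=d_i^{\bar\fs}(u_i^{\fs}-1)\,d_{i+1}^{\bar\fs}(u_i^{\fs}-1)^{-1}=\tilde{d}_i^{\bar\fs}(u_i^{\bar\fs})\tilde{d}_{i+1}^{\bar\fs}(u_{i+1}^{\bar\fs}),
\]
with the $-1$ produced by Proposition \ref{YMNd-iso} absorbed exactly into the shift $u_i^{\bar\fs}=u_i^{\fs}-1$. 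Assembling the factors in the prescribed order gives $\sigma_i(b_{M|N}^{\fs}(u))=b_{M|N}^{\bar\fs}(u)$, completing the induction; the centrality assertion then follows from \eqref{sigmasqsdet=qsdet}.

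The main obstacle I expect is the quasideterminant invariance step for $k\geq i+2$: this rests on the standard identity $|A|_{n,n}=|PAP^T|_{n,n}$ for any permutation matrix $P$ fixing $n$, but it must be spelled out carefully in the super setting so that parities and sign conventions cause no surprise (which is fine here since $\sigma_i$ preserves parities by construction). A secondary delicate point is the shift bookkeeping, where the $-1$ produced by Proposition \ref{YMNd-iso} must line up precisely with the recursion \eqref{definition:ui} in all four cases $s_{i-1},s_{i+2}\in\{0,1\}$.
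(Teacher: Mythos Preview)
Your proof is correct and follows essentially the same approach as the paper: reduce to the standard sequence via the simple transpositions $\sigma_i$, using Proposition~\ref{YMNd-iso} for the pair $(i,i+1)$ and quasideterminant invariance under simultaneous row/column permutations fixing the pivot (the paper cites \cite[I.2.5]{GR97}) for the remaining factors $d_k$. The only cosmetic differences are that the paper iterates rather than inducts formally, always picks the \emph{minimal} such $i$, and defers your step-by-step check $\sigma_i(\qsdet T^{\fs}(u))=\qsdet T^{\bar\fs}(u)$ to a single application of \eqref{sigmasqsdet=qsdet} at the end after composing all the $\sigma_i$'s into $\sigma_\fs$.
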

\begin{proof}
If $\fs$ is the standard $01$-sequence,
then $\qsdet T^{\fs}(u)$ is just the usual quantum Berezinian \eqref{bmnRTT},
and \eqref{bmndrinfeld} yields the assertion.

Alternatively, there exists an integer $1\leq i\leq M+N-1$ such that $(s_is_{i+1})=(10)$.
We put
\[k:=\min\{i;~(s_{i}s_{i+1})=(10),~1\leq i\leq M+N-1\}.\]
Let $\sigma_{k}$ be the isomorphism induced by the simple reflection $(k,k+1)\in S_{M+N}$ (see \ref{sigmai}).
We denote by $\bar\fs:=(\bar s_{1}\cdots\bar s_{M+N})$ the $01$-sequence obtained from $\fs$ by switching $s_{k}$ and $s_{k+1}$.
Note that $\tilde{d}_k(u_k)\tilde{d}_{k+1}(u_{k+1})=d_{k}(u_{k})^{-1}d_{k+1}(u_{k})$ in the expression \eqref{expression of bmns}.
Proposition \ref{YMNd-iso} in conjunction with \cite[I.2.5]{GR97} implies that
\[
\sigma_{k}(b_{M|N}^{\fs}(u))=b_{M|N}^{\bar\fs}(u).\]
Repeat the above argument with $\fs$ replaced by $\bar\fs$ until we arrive at the standard $01$-sequence.
Note that the composition of all those simple transpositions $\sigma_k$ applied in the process equals to the permutation $\sigma_\fs$.
Then we obtain
\[\sigma_{\fs}(b_{M|N}^{\fs}(u))=b_{M|N}^{\fs^{st}}(u)\in\Ymn(\fs^{st})[[u^{-1}]].\]
The assertion thus follows from \eqref{bmndrinfeld} and \eqref{sigmasqsdet=qsdet}.
\end{proof}

\begin{Example}
Let $\fs=(1010)$ be a $0^{2}1^{2}$-sequence.
By definition, $(i_{1},i_{2})=(2,4)$, $(j_{1},j_{2})=(1,3)$.
Therefore
\begin{eqnarray*}
\qsdet T^{\fs}(u)=&(t_{2,2}(u)t_{4,4}(u-1)-t_{4,2}(u)t_{2,4}(u-1))\\
&\times(t'_{1,1}(u-1)t'_{3,3}(u)-t'_{1,3}(u-1)t'_{3,1}(u)).
\end{eqnarray*}
The permutation $\sigma_{\fs}\in S_{4}$ is given by
$2\mapsto 1, 4\mapsto 2, 1\mapsto 3, 3\mapsto 4$.
The map $\sigma_{\fs}$ \eqref{sigmas} can be written as the composition of the following isomorphisms:
\[\sigma_{\fs}:Y_{2|2}(1010)\stackrel{\sigma_{1}}{\longrightarrow}Y_{2|2}(0110)\stackrel{\sigma_{3}}{\longrightarrow}Y_{2|2}(0101)\stackrel{\sigma_{2}}{\longrightarrow}Y_{2|2}(0011);\]
\[b_{2|2}^{1010}(u)\mapsto b_{2|2}^{0110}(u)\mapsto b_{2|2}^{0101}(u)\mapsto b_{2|2}^{0011}(u).\]
\end{Example}

More generally, for each $1\leq i\leq M+N$, we let
$$T_{(1,\dots,i),(1,\dots i)}(u):=(t_{k,l}(u))_{1\leq k,l\leq i}$$
be the $i\times i$ submatrix of $T^{\fs}(u)$ whose both rows and columns are enumerated by the set $\{1,\dots,i\}$ , and set
\[\fs_{i,1}:=(s_1,\dots,s_i)\quad\quad \fs_{i,2}:=(s_{i+1},\dots,s_{M+N}).\]
It follows that $\fs=\fs_{i,1}\fs_{i,2}$. The {\it quantum superminor} $\qsdet T_{(1,\dots,i),(1,\dots,i)}^{\fs_{i,1}}$ can be defined in the same way as \eqref{qsdet-arbitrary}.
The following corollary is a superalgebra generalization of \cite[Theorem 8.7 (i)]{BK05}.
\begin{Corollary}
For $1\leq i\leq M+N$, we have
\[\tilde{d}_i(u_i)=\qsdet T_{(1,\dots,i-1),(1,\dots,i-1)}^{\fs_{i-1,1}}(u)^{-1}\qsdet T_{(1,\dots,i),(1,\dots,i)}^{\fs_{i,1}}(u)\]
\end{Corollary}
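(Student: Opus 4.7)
The plan is to apply Theorem~\ref{Theorem:bmn=qsdet} to two nested sub super Yangians and take a one-sided quotient. Everything reduces to reading the factorization for $\qsdet T^{\fs_{i,1}}$ one factor at a time.

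For each $1\leq j\leq M+N$, let $m_j$ and $n_j$ denote the numbers of $0$'s and $1$'s in $\fs_{j,1}=(s_1,\ldots,s_j)$. The standard embedding $Y_{m_j|n_j}(\fs_{j,1})\hookrightarrow\Ymn(\fs)$, sending each $t_{k,l}(u)$ to the element of the same name, identifies $T^{\fs_{j,1}}(u)$ with the top-left $j\times j$ submatrix of $T^{\fs}(u)$. Since the Gauss decomposition of a matrix is uniquely determined by its upper-left minors, this embedding further identifies the Drinfeld generators $d_k^{\fs_{j,1}}(u)$ with $d_k^{\fs}(u)$ for all $1\leq k\leq j$. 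The shift parameters $u_k$ defined in~\eqref{definition:ui} depend only on $s_1,\ldots,s_k$, so they coincide whether computed inside $Y_{m_j|n_j}(\fs_{j,1})$ or inside $\Ymn(\fs)$; in particular $\tilde{d}_k(u_k)$ is unambiguous.

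Applying Theorem~\ref{Theorem:bmn=qsdet} in the subalgebra $Y_{m_i|n_i}(\fs_{i,1})$ yields
\[\qsdet T_{(1,\ldots,i),(1,\ldots,i)}^{\fs_{i,1}}(u)=\tilde{d}_1(u_1)\tilde{d}_2(u_2)\cdots \tilde{d}_{i-1}(u_{i-1})\tilde{d}_i(u_i),\]
and the same theorem applied in $Y_{m_{i-1}|n_{i-1}}(\fs_{i-1,1})$ gives
\[\qsdet T_{(1,\ldots,i-1),(1,\ldots,i-1)}^{\fs_{i-1,1}}(u)=\tilde{d}_1(u_1)\tilde{d}_2(u_2)\cdots \tilde{d}_{i-1}(u_{i-1}).\]
Left-multiplying the first identity by the inverse of the second, the telescoping prefix cancels and one obtains the claimed expression for $\tilde{d}_i(u_i)$.

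The only point that could require care is the compatibility of the Drinfeld generators across the nested sub super Yangians, but this is a purely formal consequence of the uniqueness of the Gauss decomposition restricted to leading principal submatrices. I do not expect a genuine obstacle, as the statement is essentially Theorem~\ref{Theorem:bmn=qsdet} read incrementally.
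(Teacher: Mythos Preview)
Your proof is correct and follows essentially the same approach as the paper: embed the sub super Yangians $Y_{m_j|n_j}(\fs_{j,1})$ into $\Ymn(\fs)$, apply Theorem~\ref{Theorem:bmn=qsdet} to obtain $\qsdet T_{(1,\dots,j),(1,\dots,j)}^{\fs_{j,1}}(u)=\tilde d_1(u_1)\cdots\tilde d_j(u_j)$, and then take the quotient. The paper justifies the compatibility of the Drinfeld generators via the quasideterminant formula~\eqref{quasideterminant D} rather than by invoking uniqueness of the Gauss decomposition directly, but this is the same point.
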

\begin{proof}
Denote by $m_i$ and $n_i$ the number of $0$'s and $1$'s in $\fs_{i,1}$ repectively,
so that $\fs_{i,1}$ is a $0^{m_i}1^{n_i}$-sequence.
It is easy to see that the subalgebra which is generated by the elements $\{t_{k,l}^{(r)}\}_{k,l=1}^i$ is isomorphic to the super Yangian $Y_{m_i|n_i}(\fs_{i,1})$.
We consider the standard embedding $\iota: Y_{m_i|n_i}(\fs_{i,1})\hookrightarrow \Ymn(\fs)$ under which
$t_{i,j}^{(r)}\in Y_{m_i|n_i}(\fs_{i,1})$ maps to the element with the same name in $ \Ymn(\fs)$.
Theorem \ref{Theorem:bmn=qsdet} yields
\[\qsdet T_{(1,\dots,i),(1,\dots,i)}^{\fs_{i,1}}(u)=b_{m_i|n_i}^{\fs_{i,1}}(u)\in Y_{m_i|n_i}(\fs_{i,1})[[u^{-1}]].\]
Owing to \eqref{quasideterminant D}, we obtain
\[\qsdet T_{(1,\dots,i),(1,\dots,i)}^{\fs_{i,1}}(u)=\tilde{d}_1(u_1)\tilde{d}_2(u_2)\cdots\tilde{d}_{i}(u_{i})\in \Ymn(\fs)[[u^{-1}]].\]
Then direct computation gives the assertion.
\end{proof}

Now we consider the parabolic generators defined in Section \ref{subsection:parabolic generators}.
Fix a $0^{M}1^{N}$-sequence $\fs=(s_{1}\dots s_{M+N})$ and let $\mu=(\mu_1,\ldots,\mu_n)$ be a given composition of $M+N$ with length $n$.
We break $\fs=\fs_1\fs_2\cdots\fs_n$ into $n$ subsequences according to $\mu$,
where $\fs_1$ is the subsequence consisting of the first $\mu_1$ digits of $\fs$, $\fs_2$ is the subsequence consisting of the next $\mu_2$ digits of $\fs$, and so on.

For each $1\leq a\leq n$, let $p_a$ and $q_a$ denote the number of $0$'s and $1$'s in $\fs_a$, respectively.
Then each $\fs_a$ is a $0^{p_a}1^{q_a}$-sequence of $\gl_{p_a|q_a}$,
and for all $1\leq i\leq\mu_{a}$,
put
\[a\star i:=\mu_{1}+\cdots+\mu_{a-1}+i=p_a(\mu)+i\] and
define the {\em restricted parity} $|i|_a$ by
\begin{equation*}
|i|_a:=\mbox{ the~} i-\mbox{th~ digits~ of~} \fs_a=|a\star i|.
\end{equation*}
Moreover, there exists $1\leq i_{1}<i_{2}<\cdots<i_{p_{a}}\leq \mu_{a}$ and $1\leq j_{1}<j_{2}<\cdots<j_{q_{a}}\leq \mu_{a}$
such that $|i_{k}|_{a}=0$ and  $|j_{l}|_{a}=1$ for all $1\leq k\leq p_{a}$ and $1\leq l\leq q_{a}$.
Similar to \eqref{qsdet-arbitrary} we define the quantum Berezinian of the diagonal element $D_{a}(u)$ \eqref{definition of Da} by
\begin{eqnarray}\label{qsdet-Da}
\qsdet D_{a}(u):=&\sum\limits_{\rho\in S_{p_{a}}}\sgn(\rho)D_{a;i_{\rho(1)},i_{1}}(u)D_{a;i_{\rho(2)},i_{2}}(u-1)\cdots D_{a;i_{\rho(p_{a})},i_{p_{a}}}(u-p_{a}+1)\nonumber\\
\times &\sum\limits_{\sigma\in S_{q_{a}}}\sgn(\sigma)D'_{a;j_{1},j_{\sigma(1)}}(u-p_{a}+1)\cdots D'_{a;j_{q_{a}},j_{\sigma(q_{a})}}(u-p_{a}+q_{a}).
\end{eqnarray}

Let $\{u_{i};~1\leq i\leq M+N\}$ be defined by \eqref{definition:ui}.
In particular, $u_{a\star 1}=u_{p_a(\mu)+1}$.

\begin{Theorem}\label{Theorem:qsdet=prod qsdet}
let $\mu=(\mu_1,\ldots,\mu_n)$ be a given composition of $M+N$ and $\fs=(s_{1}\cdots s_{M+N})$ be a $0^{M}1^{N}$-sequence.
For $a\geq 1$, we have
\[
\qsdet~^{\mu}D_{a}(u_{a\star1}-|1|_{a})=\tilde{d}_{a\star 1}(u_{a\star 1})\tilde{d}_{a\star 2}(u_{a\star 2})\cdots \tilde{d}_{a\star \mu_{a}}(u_{a\star\mu_{a}})
\]
In particular,
\[\qsdet T^{\fs}(u)=\qsdet D_1(u)\qsdet D_{2}(u_{2\star1}-|1|_{2})\cdots \qsdet D_{n}(u_{n\star1}-|1|_{n}).\]

\end{Theorem}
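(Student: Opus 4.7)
My plan is to reduce the identity to the case $a=1$ by means of the shift map, and then invoke Theorem~\ref{Theorem:bmn=qsdet} inside an appropriate sub-super-Yangian. Setting $\bar\mu:=(\mu_a,\ldots,\mu_n)$ and $\bar\fs:=\fs_a\fs_{a+1}\cdots\fs_n$, the identity \eqref{psi D1=Da} gives $D^{\mu}_{a;i,j}(u)=\psi_{p_a(\mu)}\bigl(D^{\bar\mu}_{1;i,j}(u)\bigr)$ for all $1\le i,j\le\mu_a$. Since $\psi_{p_a(\mu)}$ is an algebra homomorphism and the definition \eqref{qsdet-Da} is a polynomial expression in the entries $D_{a;i,j}^\mu(u-t)$ (the auxiliary inverses $D'_{a;\cdot,\cdot}$ being built from the $D_{a;\cdot,\cdot}$ and hence preserved by algebra maps), this yields
\[
\qsdet~^{\mu}D_{a}(u)=\psi_{p_a(\mu)}\bigl(\qsdet~^{\bar\mu}D_{1}(u)\bigr),
\]
so it remains to compute $\qsdet~^{\bar\mu}D_{1}(u)$ inside $Y_{\bar p|\bar q}(\bar\fs)$, where $\bar p=\sum_{b\ge a}p_b$ and $\bar q=\sum_{b\ge a}q_b$.

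For the base case (equivalently, $a=1$ in the original statement), I will use $^{\mu}D_1(u)=\,^{\mu}T_{1,1}(u)$ from \eqref{quasideterminant D}; the standard embedding $Y_{p_1|q_1}(\fs_1)\hookrightarrow\Ymn(\fs)$ identifies this $\mu_1\times\mu_1$ block with the full $T$-matrix of $Y_{p_1|q_1}(\fs_1)$. Applying Theorem~\ref{Theorem:bmn=qsdet} inside that subalgebra then gives
\[
\qsdet~^{\mu}D_{1}(u)=b_{p_1|q_1}^{\fs_1}(u)=\tilde d_1^{\fs_1}(v_1)\,\tilde d_2^{\fs_1}(v_2)\cdots\tilde d_{\mu_1}^{\fs_1}(v_{\mu_1}),
\]
where $v_1=u+s_1$ and $v_{i+1}$ is dictated by the recursion \eqref{definition:ui} read off $\fs_1$. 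Because $\fs_1$ is the initial segment of $\fs$, these $v_i$ coincide with the global $u_i$, and since the upper-left $i\times i$ quasi-determinant is the same whether computed in $Y_{p_1|q_1}(\fs_1)$ or in $\Ymn(\fs)$, the Drinfeld generators match: $\tilde d_i^{\fs_1}(w)=\tilde d_i^{\fs}(w)$ for $1\le i\le\mu_1$. Since $u_{1\star 1}-|1|_1=u$, this disposes of $a=1$.

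For general $a$, I apply $\psi_{p_a(\mu)}$ to the $a=1$ identity established for $\bar\fs$. The crucial compatibility $\psi_{p_a(\mu)}\bigl(d_i^{\bar\fs}(w)\bigr)=d_{a\star i}^{\fs}(w)$ for $1\le i\le\mu_a$ will follow from \eqref{psi D1=Da} applied with $\mu=(1^{M+N})$, together with the functoriality of shift maps $\psi_{p+q}\circ\psi_{p'+q'}=\psi_{p+p'+q+q'}$; the analogue for $\tilde d$ then holds because $|i|_a=s_{a\star i}$ ensures that the same entries are being inverted. The internal shifts $v_i$ (generated from $v_1=u+s_{a\star 1}$ by the recursion for $\bar\fs$) coincide with the global $u_{a\star i}$ precisely after the substitution $u\mapsto u_{a\star 1}-|1|_a$, yielding the first identity. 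The product formula then drops out by grouping the factors in $b_{M|N}^{\fs}(u)=\tilde d_1(u_1)\cdots\tilde d_{M+N}(u_{M+N})$ (Theorem~\ref{Theorem:bmn=qsdet}) into consecutive runs of lengths $\mu_1,\ldots,\mu_n$ and recognising each run as a $\qsdet$ of the corresponding block.

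The main obstacle is bookkeeping: I must verify that $\psi_{p_a(\mu)}$ relabels Drinfeld generators by $i\mapsto a\star i$, that inversion commutes with $\psi_{p_a(\mu)}$ so that $\tilde d$ is preserved, and that the shift parameters align correctly after the translation by $-|1|_a$. Each of these reduces to locality of the recursion \eqref{definition:ui} and functoriality of the shift map, and none is conceptually hard; however they are precisely what produces the otherwise mysterious argument $u_{a\star 1}-|1|_a$ in the statement of the theorem.
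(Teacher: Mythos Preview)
Your proposal is correct and follows essentially the same route as the paper: reduce to $a=1$ via the shift map $\psi_{p_a(\mu)}$ using \eqref{psi D1=Da}, identify $^{\bar\mu}D_1(u)$ with $^{\bar\mu}T_{1,1}(u)$ via \eqref{quasideterminant D}, apply Theorem~\ref{Theorem:bmn=qsdet} in the sub-super-Yangian $Y_{p_a|q_a}(\fs_a)$, and then shift back to obtain the $\tilde d_{a\star i}(u_{a\star i})$ factors. Your treatment is slightly more explicit about the bookkeeping (preservation of inverses under $\psi$, matching of the local $v_i$ with the global $u_{a\star i}$ after the translation by $-|1|_a$), but the argument is the same.
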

\begin{proof}
For $1\leq a\leq n$,
we let $\bar{\mu}:=(\mu_a,\mu_{a+1},\dots,\mu_n)$ and $\bar\fs:=\fs_{a}\cdots\fs_{n}$.
It follows from \eqref{psi D1=Da} that
\[
^{\mu}D_{a}(u_{a\star1}-|1|_{a})=\psi_{p_a(\mu)}(^{\bar\mu}D_{1}(u_{a\star1}-|1|_{a})),
\]
where $^{\bar\mu}D_{1}(u_{a\star1}-|1|_{a})\in Y_{\bar\mu}(\bar\fs)$.
In view of \eqref{quasideterminant D}, we have $^{\bar{\mu}}D_{1}(u)\!=\!^{\bar{\mu}}T_{1,1}(u)$,
so that Theorem \ref{Theorem:bmn=qsdet} ensures that
\begin{eqnarray*}
\qsdet~^{\mu}D_{a}(u_{a\star1}-|1|_{a})&=&\psi_{p_a(\mu)}(\qsdet~^{\bar{\mu}}T_{1,1}(u_{a\star1}-|1|_{a}))\\
&=&\psi_{p_a(\mu)}(b_{p_{a}|q_{a}}^{\fs_{a}}(u_{a\star 1}-|1|_{a}))\\
&=&\tilde{d}_{a\star 1}(u_{a\star 1})\tilde{d}_{a\star 2}(u_{a\star 2})\cdots \tilde{d}_{a\star \mu_{a}}(u_{a\star\mu_{a}}),
\end{eqnarray*}
here the last equality again follows from \eqref{psi D1=Da}.
Since the restricted parity $|1|_{1}$ is just $s_{1}$,
the assertion follows from Theorem \ref{Theorem:bmn=qsdet}  and the definition of $b_{M|N}^{\fs}(u)$ \eqref{expression of bmns}.
\end{proof}

\begin{Example}
Let $\fs=(1010)$ be a $0^{2}1^{2}$-sequence and $\mu=(2,2)$,
so that
\[u_1=u+1, u_2=u+1, u_3=u_{2\star 1}=u+1, u_4=u_{2\star 2}=u+1~\text{and}~|1|_1=s_1=1,~|1|_2=s_3=1.\]
By definition \eqref{qsdet-Da}, we have
\[\qsdet D_1(u)=D_{1;2,2}(u)D'_{1;1,1}(u)\]
and
\[\qsdet D_{2}(u_{2\star1}-|1|_{2})=\qsdet D_{2}(u)=D_{2;2,2}(u)D'_{2;1,1}(u).\]
Consequently,
\[\qsdet T^{1010}(u)=D_{1;2,2}(u)D'_{1;1,1}(u)D_{2;2,2}(u)D'_{2;1,1}(u).\]
\end{Example}

\bigskip

\bigskip

\begin{center}
\textbf{Acknowledgement}
\end{center}
It is a pleasure to thank Kang Lu for helpful comments.
We are particularly grateful to the referee for a very detailed reading of this paper and many constructive
comments and suggestions which improve the original manuscript.
This work is supported by the National Natural Science Foundation of China (Grant Nos. 11801204, 11801394) and the Fundamental Research Funds for the
Central Universities (No. CCNU22QN002).
\bigskip

\bigskip

\end{document}